\newtheorem{theorem}{\bf Theorem}
\newtheorem{lemma}{\bf Lemma}
\begin{document}

\title{On upper and lower bounds for probabilities of combinations of events 
}

\author{Andrei N. Frolov \footnote{This research is supported by RFBR, project 18--01--00393}
\\ Dept. of Mathematics and Mechanics
\\ St.~Petersburg State University
\\ St. Petersburg, Russia
\\ E-mail address: Andrei.Frolov@pobox.spbu.ru}

\maketitle


{\abstract{ We derive new upper and lower bounds for probabilities 
that $r$ or at least $r$ from $n$ events occur. These bounds can 
turn to equalities. The method is discussed as well.
It works for measurable space and measures with sign, too. We also discuss variants of the
results for conditional probability of above events given $\sigma$-field. 
Taking expectations from both parts of inequalities for conditional probabilities
can yield better bounds for unconditional ones. 
} 
}

{\bf AMS 2010 subject classification:} 60E15, 60F15,

\medskip
{\bf Key words:}
Bonferroni inequalities, Chung--Erd\H{o}s inequality, 
bounds for probabilities of unions of events,
bounds for probabilities of combinations of events,
measure of unions, 
Borel--Cantelli lemma


\section{Introduction}

In papers [1--4], we derived upper and lower bounds for probabilities and
conditional probabilities (given a $\sigma$-algebra) that at least $r$ and exactly 
$r$ from $n$ events occur.
These bounds mainly involve a small numbers (usually two or three) of moments of
the sum $\xi_n$ of the indicators of events. Moreover, these moments were of small orders.
In [5], we improved our method of deriving for such bounds from [1--4] and we obtained new upper and lower bounds for probabilities and conditional probabilities of combinations of events mentioned above. 
In the last paper, we dealt with bounds involving a large numbers of moments of $\xi_n$
and the moments were of high orders. Such bounds are well known as Bonferroni inequalities. 
Note that using of $n$ moments of $\xi_n$ yields an equality (instead of bounds) which is called 
the Jordan formula. One can find generalizations of the Jordan formula and 
Bonferroni inequalities in [5]. In present paper, we derive new upper and lower bounds 
for probabilities and conditional probabilities of combinations of events when a small number
of moments of $\xi_n$ is used. We apply an improvement of the method from [5] and
use moments of small orders which calculation is simpler. 

Bounds for probabilities of combinations of events are of essential interest in
probability, statistics, combinatorics and their applications. In probability,
bounds for union of events are of special interest. Every such bound yields
a new variant of the Borel--Cantelli lemma. One can find various bounds
for combinations of events and their applications in [6--26] and references therein.
For Bonferroni inequalities, one can check references in [5] as well.

Let $(\Omega, \mathcal{F}, \mathbf{P})$ be a probability space and $\mathcal{A}$ be
a $\sigma$-field of events with $\mathcal{A}\subset\mathcal{F}$. For a collection of
events $A_1, A_2, \dots, A_n$, let $B_i$ denote the event that exactly $i$ events
occur from those $n$ events, where $i=0,1,\ldots,n$. Put
$$U_r= \bigcup\limits_{i=r}^n B_i$$
for $r=1,2,\ldots,n$. It is clear that $ U_r $ is the event that at least $r$ from
$n$ events $A_1, A_2, \dots, A_n$ occur. 
In the sequel, we deal with the probabilities
$$ p_i = \mathbf{P}(B_i) \quad\mbox{and} \quad P_r = P(U_r) = \sum\limits_{i=r}^n p_i 
$$
and the conditional probabilities
$$ p_i^\mathcal{A} = \mathbf{P}(B_i|\mathcal{A}) \quad\mbox{and}\quad
P_r^\mathcal{A} = P(U_r |\mathcal{A}) 
= \sum\limits_{i=r}^n p_i^\mathcal{A}, \quad 
$$
where $ i=0,1,\ldots,n$ and $ r = 1, 2, \ldots n. $

In this paper, we present new upper and lower bounds for these probabilities
by linear combinations of binomial type moments of the random variable
$$ \xi_n = \sum_{i=1}^n I_{A_i}, $$ where
$ I_{A_i} $ is the indicator of the event $A_i$.
Note that
$B_i=\{ \xi_n = i \}$ for all $i$ and $U_r = \{ \xi_n \geqslant r \}$ for all $r$.

Below, our bounds for conditional and unconditional probabilities are formally
very similar. One only need to replace $ \mathbf{P}(\cdot) $ by
$ \mathbf{P}(\cdot |\mathcal{A} ) $ in the definitions of involved quantities. 
Of course, one has to remember that numbers turn to
random variables and all inequalities hold almost surely (a.s.).
Nevertheless, every bound for the conditional probability yields a bound for the
unconditional one. Image, for example, that 
$$ p_i^\mathcal{A} \geqslant \eta \quad \mbox{a.s.}
$$
for some non-negative random variable $ \eta $. Then we get
$$ p_i = \mathbf{E} p_i^\mathcal{A} \geqslant \mathbf{E} \eta.
$$
In this way, one can get a sharper inequality than its analogue for the unconditional probability. 
In the example from [25], the events $A_i$ are related with the first component 
of a two-dimensional discrete random vector while the $\sigma$-field $ \mathcal{A} $ is generated 
by the second component of this vector. This is an example of natural settings when conditional probabilities given $ \sigma $-field appear. 
 

\section{Techniques and tools}

Our method combines two results from earlier papers. The first result allows to obtain upper and lower 
bounds for linear combination of non-negative numbers with non-negative weights. 
The second one contains decompositions of probabilities of combinations of events 
in sums of such linear combinations. Note that elements of decompositions can have no probabilistic senses. They only have to be non-negative. Finding bounds for every item of the sum in the decomposition 
by the first result, we easy arrive at desired bounds for probabilities.

We will use the following notations and agreements. All vectors from $ \mathbb{R}^k $ 
are columns and they are denoted by $ \mathbf{a} $, $ \mathbf{b} $, etc. while 
their coordinates are correspondingly denoted by $a_i$, $b_i$, etc. The vector $ \mathbf{0}_k $ is the origin of $ \mathbb{R}^k $ and the vector $ \mathbf{1}_k $ consists of $k$ ones. We write $ \mathbf{a} \leqslant \mathbf{b} $ when
$a_i \leqslant b_i$ for all $i=1,\ldots,k$. Relations $ \mathbf{a} \geqslant \mathbf{b} $,
$ \mathbf{a}< \mathbf{b} $ and  $ \mathbf{a} > \mathbf{b} $ are defined in the same way.
Symbol $^T$ denotes transposition, so $ \mathbf{a}^T \mathbf{b} $ is the scalar product
of $ \mathbf{a} $ and $ \mathbf{b} $.

We will apply the following result from [5]
on inequalities for linear combinations of non-negative numbers.

\begin{theorem}\label{th1}
Assume that $\mathbf{z}, \mathbf{v} \in  \mathbb{R}^n$ and $ \mathbf{z} \geqslant \mathbf{0}_n$.
Let $\mathbf{F}=\| f_{ki}\|_{k=1,i=1}^{\ell, n}$  be a $\ell\times n$ matrix with real entries, where
$2 \leqslant \ell \leqslant n$. Put $Z= \mathbf{z}^T \mathbf{v}$
and
\begin{eqnarray} \label{10}
\mathbf{s} = \mathbf{F} \mathbf{z}.
\end{eqnarray}
Assume that for some $ \mathbf{i} \in \mathbb{N}^\ell $  with
$1 \leqslant i_1< i_2< \cdots 
< i_\ell \leqslant n$, 
the vector
$\mathbf{a} \in \mathbb{R}^\ell$ is a solution of the following linear system:
\begin{eqnarray}\label{20}
 \mathbf{F}_{\mathbf{i}}^T \mathbf{a} = \mathbf{v}_{\mathbf{i}},
\end{eqnarray}
where
$\mathbf{F}_{\mathbf{i}} = \|f_{ki_q}\|_{k=1,q=1}^{\ell, \ell}$
and $ \mathbf{v}_{\mathbf{i}} = (v_{i_1}, v_{i_2},\dots, v_{i_{\ell}})^T $.
Suppose that $\mathbf{z}^\ast \in \mathbb{R}^n$ is a vector such that its subvector
$\mathbf{z}_{\mathbf{i}}^\ast = (z_{i_1}^\ast, z_{i_2}^\ast,\dots, z_{i_\ell}^\ast)^T$ 
satisfies to the system of linear equations
\begin{eqnarray} \label{30}
\mathbf{F}_{\mathbf{i}} \mathbf{z}_{\mathbf{i}}^\ast = \mathbf{s}
\end{eqnarray} 
and $z_{i}^\ast=0$ for all $i \neq i_q$, $1 \leqslant i \leqslant n$,
$1 \leqslant q \leqslant \ell$.

If $\mathbf{b}=\mathbf{F}^T \mathbf{a} \leqslant \mathbf{v}$, then
$Z \geqslant Z^\ast = (\mathbf{z}^\ast)^T \mathbf{v}
=  \mathbf{s}^T \mathbf{a}$.
If $\mathbf{F}^T \mathbf{a} \geqslant \mathbf{v} $, then 
$Z \leqslant Z^\ast$.
\end{theorem}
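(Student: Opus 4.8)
The plan is to observe that $Z^\ast$ is, by construction, a value obtained by evaluating the linear functional $\mathbf v^T(\cdot)$ on a particular vector $\mathbf z^\ast$ that agrees with $\mathbf z$ on the image under $\mathbf F$, and then to compare $Z$ and $Z^\ast$ via the dual vector $\mathbf a$. First I would record the two identities that make everything work. Since $\mathbf z^\ast$ is supported on the coordinates $i_1,\dots,i_\ell$ and its subvector there solves \eqref{30}, we have $\mathbf F\mathbf z^\ast = \mathbf F_{\mathbf i}\mathbf z^\ast_{\mathbf i} = \mathbf s = \mathbf F\mathbf z$ by \eqref{10}; hence $\mathbf F(\mathbf z-\mathbf z^\ast)=\mathbf 0_\ell$. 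Next, because $\mathbf a$ solves \eqref{20}, the vector $\mathbf b=\mathbf F^T\mathbf a$ satisfies $b_{i_q}=v_{i_q}$ for $q=1,\dots,\ell$; combined with the support condition on $\mathbf z^\ast$ this gives $\mathbf b^T\mathbf z^\ast = \sum_{q=1}^\ell b_{i_q} z^\ast_{i_q} = \sum_{q=1}^\ell v_{i_q} z^\ast_{i_q} = \mathbf v^T\mathbf z^\ast = Z^\ast$. At the same time $\mathbf b^T\mathbf z^\ast = \mathbf a^T\mathbf F\mathbf z^\ast = \mathbf a^T\mathbf s = \mathbf s^T\mathbf a$, which re-proves the stated identity $Z^\ast=\mathbf s^T\mathbf a$.

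The core of the argument is then a one-line sandwiching. Compute
\begin{eqnarray*}
Z - Z^\ast &=& \mathbf v^T\mathbf z - \mathbf b^T\mathbf z^\ast
= \mathbf v^T\mathbf z - \mathbf a^T\mathbf F\mathbf z^\ast
= \mathbf v^T\mathbf z - \mathbf a^T\mathbf F\mathbf z
= \mathbf v^T\mathbf z - \mathbf b^T\mathbf z
= (\mathbf v-\mathbf b)^T\mathbf z,
\end{eqnarray*}
where the third equality uses $\mathbf F\mathbf z^\ast=\mathbf F\mathbf z$ and the fourth uses $\mathbf a^T\mathbf F\mathbf z=(\mathbf F^T\mathbf a)^T\mathbf z=\mathbf b^T\mathbf z$. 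Now invoke $\mathbf z\geqslant\mathbf 0_n$: if $\mathbf b\leqslant\mathbf v$ then $(\mathbf v-\mathbf b)^T\mathbf z\geqslant 0$, so $Z\geqslant Z^\ast$; if $\mathbf b\geqslant\mathbf v$ then $(\mathbf v-\mathbf b)^T\mathbf z\leqslant 0$, so $Z\leqslant Z^\ast$. This is exactly the claim.

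I do not expect a serious obstacle here; the statement is essentially a weak-duality / complementary-slackness bookkeeping lemma, and the only thing to be careful about is keeping the index sets straight — in particular that $\mathbf z^\ast$ being zero off the coordinates $i_1,\dots,i_\ell$ is what lets one replace a scalar product over all $n$ coordinates by one over the $\ell$ chosen coordinates, so that \eqref{20} and \eqref{30} (which only see the submatrix $\mathbf F_{\mathbf i}$) suffice. It is worth remarking that solvability of \eqref{20} and \eqref{30} is assumed, not asserted; the conclusion holds for any solutions $\mathbf a$ and $\mathbf z^\ast_{\mathbf i}$, and equality $Z=Z^\ast$ occurs precisely when $\mathbf z$ is orthogonal to $\mathbf v-\mathbf b$, e.g. when $\mathbf b=\mathbf v$ or when $\mathbf z$ is itself supported on $\{i_1,\dots,i_\ell\}$ — the mechanism by which these bounds can turn into equalities.
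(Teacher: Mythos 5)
Your proof is correct: the identities $\mathbf F\mathbf z^\ast=\mathbf s=\mathbf F\mathbf z$ and $b_{i_q}=v_{i_q}$ combined with the support of $\mathbf z^\ast$ give $Z^\ast=(\mathbf z^\ast)^T\mathbf v=\mathbf s^T\mathbf a$, and $Z-Z^\ast=(\mathbf v-\mathbf b)^T\mathbf z$ together with $\mathbf z\geqslant\mathbf 0_n$ yields both inequalities. The paper itself imports Theorem \ref{th1} from reference [5] without reproducing a proof, and your weak-duality bookkeeping is exactly the standard argument this statement rests on, so there is nothing further to reconcile.
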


If $\mathbf{z}^T \mathbf{1}_n=1$,  then $\mathbf{z}$ is a distribution of probabilities
and $\mathbf{s}$ is a vectors of moments of this distribution. Hence, the matrix $\mathbf{F}$
and the vector $\mathbf{s}$ are correspondingly called the matrix of moments and the vector of moments
even when $\mathbf{z}$ forms a distribution after a normalizations.

Theorem \ref{th1} allows us to construct the vector $\mathbf{z}^\ast$ having the same 
moments as $\mathbf{z}$. It is clear that repeating the procedure with $\mathbf{z}=\mathbf{z}^\ast$,
we will again obtain $\mathbf{z}^\ast$. It follows that the inequality of Theorem \ref{th1}
can turn to equalities for some $\mathbf{z}$. Moreover, if $ \ell=n $ then we will
obtain an equality which presents $Z$ as a linear combinations of moments. 
Jordan's formula is an example of such the presentation in probability and combinatorics.

Mention on a selection of $\mathbf{i}$. Without loss of generality, we can assume that
all components of $\mathbf{v}$ are ones or zeros. Below, $\mathbf{F}$ will
be a matrix of binomial type moments. Then $b_u$ is a polynomial in $u$ 
with zeros and ones in some fixed points. Dealing with this polynomial yields 
small numbers of variants for $\mathbf{i}$ provided $\ell$ is small enough. 
We consider $\ell=2$ and $\ell=3$. For large $\ell$, one can use a computer. 


Turn to decompositions of probabilities of combinations of events.

Put 
$J_d =\{ j=(j_1,\ldots,j_d):\; j_k \in \mathbb{N}
\; \mbox{for all} \; 1 \leqslant k \leqslant d 
 \; \mbox{and}
\; 1 \leqslant j_1< j_2< \cdots < j_d \leqslant n \}$
for $d=1,2,\ldots, n$
and $J_0=\{0\}$. Since $j \in J_d$ can be a number, we use notation $j$ instead of $\mathbf{j}$
in this special case.

We need the next result from [1].

\begin{lemma}\label{L1} 
Let $d$ be a fixed integer number such that $ 0 \leqslant d \leqslant r $.
Put $p_{i,j} = \mathbf{P}(B_i A_{j_1} \ldots A_{j_d})$  and
$p_{i,j}^\mathcal{A} = \mathbf{P}(B_i A_{j_1} \ldots A_{j_d}|\mathcal{A})$ 
for all $j \in J_d$. (For $d=0$, we assume that  $ A_{j_1} \ldots A_{j_d} =\Omega $ 
and, consequently, $p_{i,j} = p_i$ and $p_{i,j}^\mathcal{A} = p_i^\mathcal{A}$.)

Then for every $r$ with $1\leqslant r \leqslant n$ the following relations hold:
\begin{eqnarray}
&& \label{40}
p_r = \sum\limits_{j \in J_d} \frac{p_{r,j}}{C_r^d} , \quad
P_r = \sum\limits_{j \in J_d} \sum\limits_{i=r}^n
\frac{ p_{i,j}}{C_i^d} ,
\end{eqnarray}
where $C_i^d= i!/(d! (i-d)!)$.

If $p_r$, $P_r$ and $ p_{i,j} $ are replaced in  (\ref{40})
by $  p_r^\mathcal{A} $, $  P_r^\mathcal{A} $ 
and  $ p_{i,j}^\mathcal{A} $ correspondingly, then 
the relations hold with probability $1$.
\end{lemma}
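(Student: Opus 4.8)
The plan is to establish the first identity in (\ref{40}) by a double-counting (Fubini-type) argument over the index set $J_d$, and then derive the second from the first. First I would fix $d$ with $0\leqslant d\leqslant r$ and consider, for a fixed $i$ with $d\leqslant i\leqslant n$, the event $B_i$ on which exactly $i$ of the events $A_1,\ldots,A_n$ occur. On $B_i$ the product $A_{j_1}\cdots A_{j_d}$ occurs precisely when all $d$ indices $j_1<\cdots<j_d$ are among the $i$ indices that occurred, and the number of such $d$-tuples $j\in J_d$ is exactly $C_i^d$. Hence, summing the indicator identity $I_{B_i}=\frac{1}{C_i^d}\sum_{j\in J_d} I_{B_i A_{j_1}\cdots A_{j_d}}$ (valid because for $i<d$ both sides vanish, and this case does not occur when we later restrict to $i\geqslant r\geqslant d$) and taking expectations gives $p_i=\frac{1}{C_i^d}\sum_{j\in J_d}p_{i,j}$; specializing to $i=r$ yields the first relation in (\ref{40}).

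Next I would obtain the formula for $P_r$. Since $U_r=\bigcup_{i=r}^n B_i$ is a disjoint union, $P_r=\sum_{i=r}^n p_i$, and substituting the expression just derived for each $p_i$ (legitimate because $i\geqslant r\geqslant d$, so $C_i^d>0$ and the indicator identity holds) gives
\begin{eqnarray*}
P_r=\sum_{i=r}^n \frac{1}{C_i^d}\sum_{j\in J_d}p_{i,j}
=\sum_{j\in J_d}\sum_{i=r}^n \frac{p_{i,j}}{C_i^d},
\end{eqnarray*}
the interchange of the two finite sums being trivial. The case $d=0$ is handled by the stated convention $A_{j_1}\cdots A_{j_d}=\Omega$, $J_0=\{0\}$, and $C_i^0=1$, under which both formulas reduce to the tautologies $p_r=p_r$ and $P_r=\sum_{i=r}^n p_i$.

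For the conditional statement, I would repeat the same combinatorial identity at the level of indicators, which is a pointwise (in $\omega$) fact independent of any measure, and then apply the conditional expectation $\mathbf{E}(\cdot\mid\mathcal{A})$ instead of $\mathbf{E}$. Linearity of conditional expectation over the finite sums, together with the fact that each conditional-expectation identity holds almost surely, gives the two relations with $p_r,P_r,p_{i,j}$ replaced by $p_r^{\mathcal{A}},P_r^{\mathcal{A}},p_{i,j}^{\mathcal{A}}$ up to a single null set (the finite union of the exceptional null sets). I do not anticipate a genuine obstacle here; the only point requiring a little care is the bookkeeping that $C_i^d$ counts exactly the $d$-subsets of the occurred indices on $B_i$ and that every term is well defined because the restriction $i\geqslant r\geqslant d$ keeps all binomial coefficients positive.
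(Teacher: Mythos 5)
Your proof is correct: the pointwise identity $\sum_{j\in J_d} I_{B_i A_{j_1}\cdots A_{j_d}} = C_i^d\, I_{B_i}$, the restriction $i\geqslant r\geqslant d$ guaranteeing $C_i^d>0$, the interchange of finite sums, and the passage to $\mathbf{E}(\cdot\mid\mathcal{A})$ with a finite union of null sets are exactly what is needed. Note that the paper itself gives no proof of this lemma (it is quoted from reference [1]); your double-counting argument is the standard one and fully establishes the statement, including the $d=0$ convention and the conditional version.
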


Further, we will apply Theorem 1 to bound every item with index $j$ in
decompositions $(\ref{40})$. Note that for $P_r$, items are sums over $i$.

\section{Inequalities for probabilities of combinations of events}

Assume that $1 \leqslant r \leqslant n$ and $0 \leqslant d \leqslant r $.
For every $j\in J_d$, put $\mathbf{z}(j)=(z_1(j),\ldots, z_{n-d+1}(j))^T$, where
$$z_{i}(j) = \frac{p_{i+d-1,j}}{C_{i+d-1}^d}\quad\mbox{for}\quad i=1,2,\ldots, n-d+1.$$
Note that $p_{0,j}=\cdots=p_{d-1,j}=0$ for all $j\in J_d$ and $d\geqslant 1$. 

Take $\ell$ with $\ell \leqslant n-d+1$.
Put
\begin{eqnarray}\label{f}
\mathbf{F} = \| C_{i+d-1}^{k+d-1}\|_{k=1,i=1}^{\ell,n-d+1},
\end{eqnarray}
where
$$ C_u^v = \frac{(u)_v}{v!}, \quad (u)_v = u (u-1)\times\cdots\times (u-v+1)
$$
for all natural $u$ and $v$. 
Since $C_u^v=0$ for $v>u$, we have $f_{ki}=0$ for $k>i$. It is also clear that
$f_{kk}=1$ for all $k$. 

Put $\mathbf{s}(j) = \mathbf{F} \mathbf{z}(j)$ for all $j\in J_d$.

By (\ref{40}), we have
$$ P_r = \sum\limits_{j \in J_d} Z(j) \quad\mbox{and}\quad p_r = \sum\limits_{j \in J_d} Z(j)
\quad\mbox{for}\quad Z(j) = (\mathbf{z}(j))^T \mathbf{v}
$$
provided $\mathbf{v}$ is chosen appropriately.
Applying Theorem \ref{th1} with $\mathbf{z}=\mathbf{z(j)}$,
we can estimate $Z(j)$ by
$Z^*(j) = (\mathbf{s}(j))^T \mathbf{a}$ for all $j\in J_d$.
By Theorem \ref{th1}, we also have
$ Z^*(j) = (\mathbf{z}^*(j))^T \mathbf{v}$
for all $j\in J_d$. 
Hence, bounds turn to equalities for $\mathbf{z}(j)=\mathbf{z}^8(j)$.
It follows that bounds for $Z(j)$ yield inequalities for probabilities of 
corresponding combinations of events and these inequalities are sharp.

By Lemma 2 from [5], we  have
\begin{eqnarray}\label{sk}
s_k(j) = \frac{d!}{(k+d-1)!}
\sum\limits_{u_1\neq \cdots\neq u_{k-1} \in \{1,\ldots,n\} \backslash \{j_1,\ldots,j_d\}}
 \mathbf{P}\left( A_{u_1}\ldots A_{u_{k-1}}  A_{j_1}\ldots A_{j_d} \right)
\end{eqnarray}
for $1\leqslant k\leqslant \ell$ and  $j\in J_d$. 
%
From the proof of this lemma, 
one can also see that
$$ s_k(j) = \frac{d!}{(k+d-1)!} \mathbf{E} (\xi_n-d)_{k-1} I_{A_{j_1} \ldots A_{j_d}} 
$$
for all $k$ and $j\in J_d$. 
Hence, $ s_k(j) $ are normalized binomial moments of $\xi_n$ centered at $d$ over
the intersection of events $ A_{j_1}, \ldots, A_{j_d} $. (For $d=0$, they
are binomial moments of $\xi_n$.)

Relation $ (\ref{sk}) $ shows that $ s_k(j) $ are normalized sums of probabilities  of intersections
of the events under consideration and our bounds are applicable in practice. 
This is a reason to use matrix $\mathbf{F}$ introduced above.

Note that $s_1(j) = \mathbf{P}\left(  A_{j_1}\ldots A_{j_d} \right)$
and $s_1(j) =1$ for $d=0$ in particular. Hence, we use moments of "zero" order
of $\xi_n$ as well. One can easily modify the proofs below for the case of moments of higher
orders, but one have to take into account that applications of higher moments yield
more rough inequalities. 

We derive new inequalities for $\ell=2$ and $\ell=3$ only. For large $\ell$,
one can obtain better bounds, but direct calculations will be complicated and 
computers can be used then. 

Our first result is as follows.

\begin{theorem}\label{th2}
Assume that $\ell=2$ and $\mathbf{s}(j) = \mathbf{F} \mathbf{z}(j)$ for all $j\in J_d$
with $\mathbf{F}$ from (\ref{f}). (In this case, relation (\ref{sk}) holds for
$s_1(j)$ and $s_2(j)$, $j\in J_d$.) 

If $r-d \geqslant 1$, then
\begin{eqnarray}\label{u1}
P_r \leqslant \sum_{j \in J_d}  \frac{s_2(j)}{C_{r}^{d+1}}
\quad\mbox{and}\quad
p_r \leqslant \sum_{j \in J_d}  \frac{s_2(j)}{C_{r}^{d+1}}.
\end{eqnarray}

If $n-r \geqslant 1$, then
\begin{eqnarray}\label{u2}
P_r \leqslant \sum_{j \in J_d}  
\frac{(C_n^{d+1}-C_r^{d+1}) s_1(j)-(C_n^d-C_r^d) s_2(j)}{C_n^{d+1} C_r^d - C_n^d C_{r}^{d+1}}
\;\;\mbox{and}\;\;
p_r \leqslant \sum_{j \in J_d} 
\frac{C_n^{d+1} s_1(j)-C_n^d s_2(j)}{C_n^{d+1} C_r^d - C_n^d C_{r}^{d+1}}.
\end{eqnarray}

If $r-d \geqslant 1$, then
\begin{eqnarray}\label{l1}
P_r \geqslant \sum_{j \in J_d}  
\frac{-C_{r-1}^{d+1} s_1(j)+C_{r-1}^d s_2(j)}{ C_n^{d+1} C_{r-1}^d - C_n^d C_{r-1}^{d+1}}
\quad\mbox{and}\quad
p_n \geqslant \sum_{j \in J_d}  
\frac{-C_{n-1}^{d+1} s_1(j)+C_{n-1}^d s_2(j)}{ C_n^{d+1} C_{n-1}^d - C_n^d C_{n-1}^{d+1}}.
\end{eqnarray}

If $r=d \geqslant 1$ and $n-r\leqslant 1$, then
\begin{eqnarray}\label{l2}
P_d \geqslant \sum_{j \in J_d}  
\frac{ (d+1) (m s_1(j)-d s_2(j))}{ (m+d) C_{m+d-1}^d}
\quad\mbox{and}\quad
p_d \geqslant \sum_{j \in J_d}  \left( s_1(j)-(d+1) s_2(j)\right),
\end{eqnarray}
where $m$ is a natural number with $1\leqslant m \leqslant n-d$.
Optimal value of $m$ is  
$$ m-1 \leqslant \frac{(d+1) s_2(j)}{ s_1(j)} \leqslant m
$$
for $  s_1(j)>0 $ and $1$ or $n-d$ otherwise.

Define $s_1^\mathcal{A}(j)$ and $s_2^\mathcal{A}(j)$ for $j\in J_d$  by 
the right-hand side of relation (\ref{sk}) with $\mathbf{P}(\cdot)$ replaced
by $\mathbf{P}(\cdot|\mathcal{A})$.  Inequalities (\ref{u1})--(\ref{l2}) hold a.s.
with $P_r^{\mathcal{A}}$ and $p_r^{\mathcal{A}}$ instead of $P_r$ and $p_r$
and $s_1^\mathcal{A}(j)$ and $s_2^\mathcal{A}(j)$ instead of 
$s_1(j)$ and $s_2(j)$,
correspondingly.

\end{theorem}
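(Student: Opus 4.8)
The plan is to combine Lemma~\ref{L1} with Theorem~\ref{th1} exactly as announced before the statement. Fix $j\in J_d$ and regard $Z(j)=(\mathbf z(j))^T\mathbf v$ as the quantity to be bounded, where $\mathbf v$ has $1$ in every coordinate $i\ge r-d+1$ and $0$ elsewhere when the target is $P_r$, and $\mathbf v$ has $1$ in coordinate $r-d+1$ and $0$ elsewhere when the target is $p_r$; by~(\ref{40}) this gives $P_r=\sum_{j\in J_d}Z(j)$ and $p_r=\sum_{j\in J_d}Z(j)$, respectively. With $\ell=2$ and $\mathbf F$ as in~(\ref{f}), for a pair $\mathbf i=(i_1,i_2)$ the matrix $\mathbf F_{\mathbf i}$ has determinant $C_{u_1}^{d}C_{u_2}^{d}(u_2-u_1)/(d+1)$ with $u_q=i_q+d-1\ge d$, hence it is nonsingular, so~(\ref{20}) has a unique solution $\mathbf a\in\mathbb{R}^2$ and Theorem~\ref{th1} produces $Z^*(j)=\mathbf s(j)^T\mathbf a=a_1s_1(j)+a_2s_2(j)$. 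Writing $u=i+d-1$, the coordinate $b_i$ of $\mathbf b=\mathbf F^T\mathbf a$ equals $b(u)=C_u^{d}\,g(u)$ with $g(u)=a_1+a_2(u-d)/(d+1)$ affine in $u$; thus in each case the only thing left to verify is the sign of $b(u)-v(u)$ on the integers $u\in\{d,d+1,\dots,n\}$.

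For each of the four cases I would choose $\mathbf i$ as follows and read $\mathbf a$ off~(\ref{20}). For~(\ref{u1}) (using $r-d\ge1$): take $u\in\{d,r\}$; the node $u=d$ forces $a_1=0$, and then $u=r$ gives $a_2=1/C_r^{d+1}$, so $Z^*(j)=s_2(j)/C_r^{d+1}$, and the same $\mathbf a$ serves both $P_r$ and $p_r$. For~(\ref{u2}) (using $n-r\ge1$): take $u\in\{r,n\}$, with $\mathbf v_{\mathbf i}=(1,1)^T$ for $P_r$ and $\mathbf v_{\mathbf i}=(1,0)^T$ for $p_r$, and Cramer's rule on~(\ref{20}) yields exactly the displayed coefficients. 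For~(\ref{l1}) (using $r-d\ge1$): take $u\in\{r-1,n\}$ with $\mathbf v_{\mathbf i}=(0,1)^T$, and obtain the $p_n$ line by specialising $r=n$ (recall $P_n=p_n$). For~(\ref{l2}), where $r=d$, so $\mathbf v=\mathbf 1_{n-d+1}$ for $P_d$ and $\mathbf v$ is the first coordinate vector for $p_d$: take $u\in\{m+d-1,m+d\}$ for $P_d$, and after using the identity $(m+d)C_{m+d-1}^d=(d+1)C_{m+d}^{d+1}$ one gets $a_1=(d+1)m/((m+d)C_{m+d-1}^d)$, $a_2=-(d+1)d/((m+d)C_{m+d-1}^d)$; and take $u\in\{d,d+1\}$ for $p_d$, giving $a_1=1$, $a_2=-(d+1)$. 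In every case a one-line computation of $a_1s_1(j)+a_2s_2(j)$ reproduces the stated right-hand side.

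The remaining step is to check the sign hypothesis of Theorem~\ref{th1}. When the target is $p_r$ (so $v(\cdot)$ is the indicator of a single value of $u$), the sign of $b(u)=C_u^dg(u)$ off that point is just the sign of the affine $g$, and the monotonicity of $g$ together with its values at the two nodes gives $b(u)\ge v(u)$ for the upper bounds at once. When the target is $P_r$ or $P_d$, the issue is that the two-moment interpolant $b(u)$ must stay above (for upper bounds) or below (for lower bounds) the step of $v$ between its two nodes. In~(\ref{u1}) and~(\ref{l1}) this is immediate, because on the relevant range $b(u)=C_u^dg(u)$ is a product of two non-negative non-decreasing functions, hence non-decreasing, and it already equals the step value at the appropriate endpoint. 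In~(\ref{u2}) and~(\ref{l2}) this is the main obstacle, and I would clear it by a log-concavity argument: on the interval where $u-k>0$ for $0\le k\le d-1$ and $g(u)>0$ (which contains both nodes and the step), the function $\ln b(u)=\sum_{k=0}^{d-1}\ln(u-k)-\ln d!+\ln g(u)$ is concave and vanishes at both nodes, hence is $\ge0$ between them and $\le0$ outside them; translating back gives $b(u)\ge1$ (resp. $\le1$) precisely where $v(u)=1$, while wherever $g(u)\le0$ one simply has $b(u)=C_u^dg(u)\le0$. Some care is needed in the degenerate subcases $d=0$, $r=d$, and adjacent nodes $u=m+d-1,m+d$, where no integer lies strictly between the two nodes, so the log-concavity conclusion is used only at integer points.

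Granting the sign hypotheses, Theorem~\ref{th1} gives $Z(j)\le Z^*(j)$ for the upper bounds and $Z(j)\ge Z^*(j)$ for the lower ones, for every $j\in J_d$; summing over $j\in J_d$ and using~(\ref{40}) yields~(\ref{u1})--(\ref{l2}). The optimal value of $m$ in~(\ref{l2}) comes from maximising the per-$j$ lower bound, which is proportional to $(ms_1(j)-ds_2(j))/C_{m+d}^{d+1}$, over $m\in\{1,\dots,n-d\}$; comparing the values at consecutive $m$ gives the stated condition, and since each $Z(j)$ is bounded separately the chosen $m$ may depend on $j$. Finally, the conditional assertions follow by running the whole argument almost surely: Lemma~\ref{L1} supplies the conditional decomposition, $s_1^{\mathcal A}(j)$ and $s_2^{\mathcal A}(j)$ are non-negative a.s., Theorem~\ref{th1} is an algebraic statement that holds a.s., and the vectors $\mathbf a$ above are deterministic, so they still solve~(\ref{20}) and satisfy the sign conditions with the conditional moments in place of $s_1(j),s_2(j)$.
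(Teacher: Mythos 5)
Your proposal is correct and follows the paper's own skeleton: the decomposition of Lemma \ref{L1}, the matrix (\ref{f}), and exactly the same choices of nodes $\mathbf i$ and solutions $\mathbf a$ of (\ref{20}) in all four cases, so the coefficients and hence the bounds (\ref{u1})--(\ref{l2}) come out identically. Where you genuinely differ is in the two verification steps. First, you check the hypothesis $\mathbf F^T\mathbf a\gtrless\mathbf v$ of Theorem \ref{th1} directly (monotonicity of $C_u^d g(u)$ for (\ref{u1}) and (\ref{l1}), the sign of the affine factor $g$, and log-concavity of $C_u^d g(u)$ between and outside the two interpolation nodes for (\ref{u2}) and (\ref{l2})), whereas the paper counts local extremes of the degree-$(d+1)$ polynomial $b(u)$; your route is a clean sufficiency proof, the paper's additionally shows these nodes are the only admissible ones. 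Your argument does work, including the adjacent-node case in (\ref{l2}); the one place you should write a line explicitly is (\ref{u2}), where besides $b(u)\geqslant 1$ on the node interval you also need $b(u)\geqslant 0$ for $d\leqslant u\leqslant r-1$, which your log-concavity statement does not literally cover --- it follows immediately because $a_2\leqslant 0$ makes $g$ nonincreasing and $g(n)\geqslant 0$ at the right node, so $g\geqslant 0$ on the whole range. Second, you obtain the optimal $m$ in (\ref{l2}) by maximizing $(m\,s_1(j)-d\,s_2(j))/C_{m+d}^{d+1}$ over $m$, which gives the same condition as the paper's criterion $z^*_m(j)\geqslant 0$, $z^*_{m+1}(j)\geqslant 0$; and you get the $p_n$ part of (\ref{l1}) by specializing $r=n$ via $P_n=p_n$, a legitimate shortcut compared with the paper's separate case analysis. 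The conditional case is handled as in the paper (a.s.\ validity of the decomposition, deterministic $\mathbf a$, finitely many exceptional null sets), so nothing is missing there.
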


\begin{proof}
For $\mathbf{b} = \mathbf{F}^T \mathbf{a}$, we have
$$ b_u = C_{u+d-1}^{d} a_1 + C_{u+d-1}^{d+1} a_2
= C_{u+d-1}^{d} L(u) 
= \frac{u (u-1)\cdot\ldots\cdot (u-d+1)}{d!}  L(u) 
$$ 
for $ u=1,2,\ldots,n-d+1 $, where
$$ L(u) = a_1 + \frac{u-1}{d+1} a_2.
$$
Put
$$ b(u)= \frac{u (u-1)\cdot\ldots\cdot (u-d+1)}{d!} L(u) \quad\mbox{for}\quad u \in \mathbb{R}.
$$
Below we use properties of polynomial $b(u)$ to find variants for $\mathbf{i}$. 
Note that the degree of $b(u)$ is $d+1$. Hence it has no more than $d$ local extremes.
Since it has $d-1$ local extremes on negative half-line between zeros
in $0,-1,\ldots,-(d-1)$, there are no more than one
local extreme on positive half-line.

We consider two variants of $\mathbf{v}$ as follows:
$$  \left\lgroup \begin{matrix}
\mathbf{0}_{r-d} \cr \mathbf{1}_{n-r+1} 
\end{matrix} \right\rgroup
\quad\mbox{and}\quad
 \left\lgroup \begin{matrix}
\mathbf{0}_{r-d} \cr 1 \cr \mathbf{0}_{n-r} 
\end{matrix} \right\rgroup
$$
to get bounds for $P_r$ and $p_r$, correspondingly. Then vector $\mathbf{v}_{\mathbf{i}}$ 
can only be 
$$ a) \left\lgroup \begin{matrix}
0 \cr 0 
\end{matrix} \right\rgroup, \quad 
b) \left\lgroup \begin{matrix}
0 \cr 1 
\end{matrix} \right\rgroup, \quad
c) \left\lgroup \begin{matrix}
1 \cr 1 
\end{matrix} \right\rgroup
\quad\mbox{and}\quad
a) \left\lgroup \begin{matrix}
0 \cr 0 
\end{matrix} \right\rgroup, \quad 
b) \left\lgroup \begin{matrix}
0 \cr 1 
\end{matrix} \right\rgroup, \quad
c) \left\lgroup \begin{matrix}
1 \cr 0 
\end{matrix} \right\rgroup
$$
for $P_r$ and $p_r$, correspondingly. We will see below that $\mathbf{i}$ is
the same for $P_r$ and $p_r$ in cases a), b) and c), correspondingly.
Therefore, we deal with cases a)--c) for $P_r$ and $p_r$ simultaneously. 

We start with upper bounds. 

For $P_r$, coefficients $a_1$ and $a_2$ have to be such that
$b_u \geqslant 0$ for $u\leqslant r-d$  and $b_{u} \geqslant 1$ for $u \geqslant r-d+1$.
For $p_r$, we need $b_u \geqslant 0$ for $u\neq r-d+1$  and $b_{r-d+1} \geqslant 1$.
This follows from condition $\mathbf{b} \geqslant \mathbf{v}$ of Theorem \ref{th1}.

a) In this case, $a_1=a_2=0$ and, therefore, $b_u=0$ for all $u$.
So, we have no bounds. 

b) For $P_r$, we have $b_{i_1}=0$ and $b_{i_2}=1$ for some 
$i_1 \leqslant r-d < i_2 \leqslant n-d+1$. Hence, $b(u)$ has a local
extreme on $(0, i_1)$ and it is strictly increasing on $(i_1,i_3)$.
Then the local extreme is minimum and it is negative. It follows that
$i_1=1$ and $i_2=r-d+1$. This is the only option which gives    
$b_u \geqslant 0$ for $u\leqslant r-d$  and $b_{u} \geqslant 1$ for $u \geqslant r-d+1$.
For $p_r$, the choice of $\mathbf{i}$ is the same. Hence, bounds for
$P_r$ and $p_r$ coincide in this case.

Put $\mathbf{i}=(1,r-d+1)$. Then
$$ \mathbf{F}_\mathbf{i} = \left\lgroup \begin{matrix}
1 & C_{r}^d \cr  0 & C_{r}^{d+1} 
\end{matrix} 
\right\rgroup \quad\mbox{and}\quad
\mathbf{v}_\mathbf{i} = \left\lgroup \begin{matrix}
0 \cr 1 
\end{matrix} \right\rgroup.
$$
The solution of system (\ref{20}) is
$$ \mathbf{a} = \frac{1}{C_{r}^{d+1}}
\left\lgroup \begin{matrix}
0 \cr 1
\end{matrix} \right\rgroup
$$
and we get (\ref{u1}).

c) For $P_r$, we have $b_{i_1}=1$ and $b_{i_2}=1$ for some 
$r-d+1 \leqslant i_1 < i_2 \leqslant n-d+1$.
Hence, $b(u)$ has a local extreme on $(i_1,i_2)$ and it is strictly
increasing on $(0, i_1)$. Hence, $i_1=r-d+1$ and $i_2=n-d+1$
which is the only variant to satisfy
$b_u \geqslant 0$ for $u\leqslant r-d$  and $b_{u} \geqslant 1$ for $u \geqslant r-d+1$.

For $p_r$,  we have $b_{r-d+1}=1$ and $b_{i_2}=0$ for some 
$r-d+2 \leqslant i_2 \leqslant n-d+1$. It yields that
$b(u)$ has a local extremes on $(0,i_2)$. Then $i_2=n-d+1$
to satisfy $b_u \geqslant 0$ for $u\neq r-d+1$  and $b_{r-d+1} \geqslant 1$.

%
%

It follows that $\mathbf{i}$ is the same for $P_r$ and $p_r$ while $\mathbf{v}_{\mathbf{i}}$
is different in (\ref{20}).

Put $\mathbf{i}=(r-d+1,n-d+1)$. Then
$$ \mathbf{F}_\mathbf{i} = \left\lgroup \begin{matrix}
C_r^d & C_{n}^d \cr  C_r^{d+1} & C_{n}^{d+1} 
\end{matrix} 
\right\rgroup, 
\quad
\mathbf{v}_\mathbf{i} = \left\lgroup \begin{matrix}
1 \cr 1 
\end{matrix} \right\rgroup
\quad\mbox{or}\quad
\mathbf{v}_\mathbf{i} = \left\lgroup \begin{matrix}
1 \cr 0
\end{matrix} \right\rgroup
$$
for $P_r$ and $p_r$, correspondingly.

The solutions of system (\ref{20}) are
$$ \mathbf{a} = \frac{1}{C_n^{d+1} C_r^d - C_n^d C_{r}^{d+1}}
\left\lgroup \begin{matrix}
C_n^{d+1}-C_r^{d+1} \cr C_r^{d}-C_n^{d}
\end{matrix} \right\rgroup
\quad\mbox{and}\quad
\mathbf{a} = \frac{1}{C_n^{d+1} C_r^d - C_n^d C_{r}^{d+1}}
\left\lgroup \begin{matrix}
C_n^{d+1} \cr - C_{n}^{d}
\end{matrix} \right\rgroup
$$
for $P_r$ and $p_r$, correspondingly. 
Hence, we get (\ref{u2}).

Turn to lower bounds. 

Three options for $\mathbf{v}_\mathbf{i}$ are the same
as for the upper bounds, but in case of lower bounds, we have another restrictions
on $b_u$. For $P_r$, we need $b_u\leqslant 0$
for $u \leqslant r-d$ and $b_u\leqslant 1$ for $n \geqslant r-d+1$. 
For $p_r$, we need $b_u \leqslant 0$ for $u\neq r-d+1$  and $b_{r-d+1} \leqslant 1$.
This is condition $\mathbf{b} \leqslant \mathbf{v}$ of Theorem \ref{th1}.

We deal with cases a)--c) again

a) For this option, we get trivial bounds by zero. 

b) For $P_r$, we have $b_{i_1}=0$ and $b_{i_2}=1$ for some 
$i_1 \leqslant r-d < i_2 \leqslant n-d+1$.  
It yields that $b(u)$ has a local extreme on $(0, i_1)$ and it is
strictly increasing on $(i_1,i_2)$. Hence, $i_1=r-d$ and $i_2=n-d+1$
which can only give $b_u\leqslant 0$
for $u \leqslant r-d$ and $b_u\leqslant 1$ for $n \geqslant r-d+1$. 

For $p_r$, we have $b_{i_1}=0$ and $b_{r-d+1}=1$ for some 
$i_1 \leqslant r-d$.
It follows that $b(u)$ has a local extreme on $(0, i_1)$ and it is
strictly increasing on $(i_1,r-d+1)$. Hence, $i_1=r-d$, $i_2=n-d+1$ and $r=n$
is the only way to have 
$b_u \leqslant 0$ for $u\neq r-d+1$  and $b_{r-d+1} \leqslant 1$.
 
It turns out that system (\ref{20}) is the same for $P_r$ and $p_r$, but
we can get a bound for $p_r$ for $r=n$ only.


Put $\mathbf{i}=(r-d,n-d+1)$. Then
$$ \mathbf{F}_\mathbf{i} = \left\lgroup \begin{matrix}
C_{r-1}^d & C_{n}^d \cr  C_{r-1}^{d+1} & C_{n}^{d+1} 
\end{matrix} 
\right\rgroup \quad\mbox{and}\quad
\mathbf{v}_\mathbf{i} = \left\lgroup \begin{matrix}
0 \cr 1 
\end{matrix} \right\rgroup
$$

The solution of system (\ref{20}) is
$$ \mathbf{a} = \frac{1}{C_n^{d+1} C_{r-1}^d - C_n^d C_{r-1}^{d+1}}
\left\lgroup \begin{matrix}
-C_{r-1}^{d+1} \cr C_{r-1}^{d}
\end{matrix} \right\rgroup
$$
and we arrive at (\ref{l1}).

c)  For $P_r$, we have $b_{i_1}=1$ and $b_{i_2}=1$ for some 
$r-d+1 \leqslant i_1 < i_2 \leqslant n-d+1$.  
Then $b(u)$ has a local extreme on $(i_1, i_2)$ and it is
strictly increasing on $(0,i_1)$. Hence, the case $r=d$ and $i_2=i_1+1$ 
only gives $b_u\leqslant 0$
for $u \leqslant r-d$ and $b_u\leqslant 1$ for $n \geqslant r-d+1$. 

For $p_r$, we have $b_{r-d+1}=1$ and $b_{i_2}=0$ for some 
$r-d+2 \leqslant i_2 \leqslant n-d+1$. Then $b(u)$ has a local extreme on $(0, i_2)$.
Then $r=d$ and $i_2=r-d+2=2$ and this choice
can only yield $b_u\leqslant 0$
for $u \neq r-d+1$ and $b_{r-d+1}\leqslant 1$. 

Assume that $r=d>0$.
For $P_r$, put $\mathbf{i}=(m,m+1)$ for $r-d+1 \leqslant m \leqslant n-d$. Then 
$$ \mathbf{F}_\mathbf{i} = \left\lgroup \begin{matrix}
C_{m+d-1}^d & C_{m+d}^d \cr  C_{m+d-1}^{d+1} & C_{m+d}^{d+1} 
\end{matrix} 
\right\rgroup \quad\mbox{and}\quad
\mathbf{v}_\mathbf{i} = \left\lgroup \begin{matrix}
1 \cr 1 
\end{matrix} \right\rgroup
$$
For $p_r$, we take $m=1$ and  $ \mathbf{v}_\mathbf{i} = (1,0)^T $.

Note that
$$\mathbf{F}_\mathbf{i}^{-1} = \frac{1}{(m+d) C_{m+d-1}^d}
\left\lgroup \begin{matrix}
m(m+d) & -(d+1)(m+d) \cr  -m(m-1) & (d+1)m
\end{matrix} 
\right\rgroup.
$$ 

The solutions of system (\ref{20}) are
$$ \mathbf{a} = \frac{d+1}{(m+d) C_{m+d-1}^d}
\left\lgroup \begin{matrix}
m \cr -d 
\end{matrix} \right\rgroup
\quad\mbox{and}\quad
\mathbf{a} = 
\left\lgroup \begin{matrix}
1 \cr -(d+1) 
\end{matrix} \right\rgroup
$$
for $P_r$ and $p_r$, correspondingly. This implies (\ref{l2}).

Optimize $m$ in the bound for $P_r$. By (\ref{30}), 
$\mathbf{z}_{\mathbf{i}}^*(j)=\mathbf{F}_\mathbf{i}^{-1} \mathbf{s}$.
This yields that
\begin{eqnarray*}
z^*_m(j) = \frac{m s_1(j) -(d+1) s_2(j)}{ C_{m+d-1}^d},\quad
z^*_{m+1}(j)=\frac{m ( -(m-1) s_1(j) +(d+1) s_2(j))}{(m+d) C_{m+d-1}^d}.
\end{eqnarray*}
Inequalities $ z^*_{m}(j) \geqslant 0 $ and 
$ z^*_{m+1}(j) \geqslant 0 $ implies that
$$ m \geqslant \frac{(d+1) s_2(j)}{ s_1(j)}
\quad\mbox{and}\quad
m-1 \leqslant \frac{(d+1) s_2(j)}{ s_1(j)}
$$
provided $  s_1(j)>0 $.

We now turn to bounds for the conditional probability given $\sigma$-field $\mathcal{A}$.
Fix variants of all random variables in the equality 
defining $P_r^\mathcal{A}$. This equality holds for all $\omega\in \Omega\setminus \mathcal{N}$,
where $\mathbf{P}(\mathcal{N})=0$. For every fixed $\omega\in \Omega\setminus \mathcal{N}$
we prove bounds similar to (\ref{u1})--(\ref{l2}) in the same way as before. 
(Note that our method do not depend on probability background. It works for numbers as well.)
As a result, we get inequalities (\ref{u1})--(\ref{l2}) for the variants of random variables
chosen before. If we replace one of these random variables by another variant then
the inequalities may fail on some set 
of zero probability.
We deal with a finite number of random variables. Hence, such replacements
of one or several random variables may fail our inequalities only on a set
of zero probability. It follows that inequalities (\ref{u1})--(\ref{l2}) hold a.s.
\end{proof}

Turn to the case $\ell=3$. Start with the following result for upper bounds.

\begin{theorem}\label{th3}
Assume that $\ell=3$ and $\mathbf{s}(j) = \mathbf{F} \mathbf{z}(j)$ for all $j\in J_d$
with $\mathbf{F}$ from (\ref{f}). (In this case, relation (\ref{sk}) holds for
$s_1(j)$, $s_2(j)$ and $s_3(j)$, $j\in J_d$.) 

If $r-d \geqslant 2$ 
then
\begin{eqnarray}
\label{Pr3ub1}
P_r \leqslant  \sum_{j \in J_d} {\bm\alpha}^{T} \mathbf{s}(j)
\quad\mbox{and}\quad
p_r \leqslant  \sum_{j \in J_d} {\bm\alpha}^{T} \mathbf{s}(j)
\end{eqnarray}
where
\begin{eqnarray}
\label{al}
 {\bm\alpha} = \frac{1}{\Delta C_{r}^{d}} \left\lgroup \begin{matrix}
m(m-1) \cr - 2(d+1) (m-1) \cr (d+1)(d+2) \end{matrix} \right\rgroup ,\quad
\Delta= (r-d-m)(r-d-m+1)
\end{eqnarray}
and $m$ is an arbitrary natural number such that 
$1 \leqslant m \leqslant r-d-1$. The optimal value of $m$ is 
\begin{eqnarray}
\label{mop}
 m-1  \leqslant (d+1)
\frac{ (r-d-1) s_2(j)-(d+2) s_3(j)}{(r-d) s_1(j)- (d+1) s_2(j)  }\leqslant m
\end{eqnarray}
for 
$(r-d) s_1(j)-(d+1) s_2(j) > 0$ and $1$ or $r-d-1$
otherwise.

If $r-d \geqslant 1$ and $n-r \geqslant 1$, then
\begin{eqnarray}
\label{Pr3ub2}
P_r \leqslant  \sum_{j \in J_d} {\bm\beta}^{T} \mathbf{s}(j)\quad\mbox{and}\quad
p_r  \leqslant  \sum_{j \in J_d} {\bm\delta}^{T} \mathbf{s}(j),
\end{eqnarray}
where
$$
{\bm\beta} =  \frac{1}{\Delta_1}
\left\lgroup \begin{matrix}
0 \cr C_n^{d+2}-C_{r}^{d+2} \cr C_{r}^{d+1} - C_n^{d+1}
\end{matrix} \right\rgroup, \quad 
{\bm\delta} =  \frac{1}{\Delta_1}
\left\lgroup \begin{matrix}
0 \cr C_n^{d+2} \cr  - C_n^{d+1}
\end{matrix} \right\rgroup \quad \mbox{and}\quad
\Delta_1= C_n^{d+2} C_{r}^{d+1} - C_n^{d+1} C_{r}^{d+2}.
$$

If 
$n-r \geqslant 2$, then
\begin{eqnarray}
\label{Pr3ub3}
P_r \leqslant  \sum_{j \in J_d} {\bm\gamma}^{T} \mathbf{s}(j)\quad\mbox{and}\quad
p_r \leqslant  \sum_{j \in J_d} {\bm\alpha}^{T} \mathbf{s}(j),
\end{eqnarray}
where  
$ {\bm\gamma} = (\gamma_{1}, \gamma_{2}, \gamma_{3})^T $ with
\begin{eqnarray*}
&& \hspace*{-\parindent}
 \gamma_{1}= \frac{m(r-d)(r-d-m)}{C_{m+d-1}^d \Delta } - \frac{(m-1)(r-d)(r-d-m+1)}{C_{m+d}^d \Delta }
+\frac{m(m-1)}{C_{r}^d \Delta },\\
&&   \hspace*{-\parindent}
\gamma_{2}= (d+1)\left(
-\frac{(r-d-m)(r-d+m-1)}{C_{m+d-1}^d \Delta } + \frac{(r-d-m+1)(r-d+m-2)}{C_{m+d}^d \Delta }
-\frac{2(m-1)}{C_{r}^d \Delta }\right),\\
&&  \hspace*{-\parindent}
\gamma_{3}= (d+1)(d+2)\left(\frac{r-d-m}{C_{m+d-1}^d \Delta } - \frac{r-d-m+1}{C_{m+d}^d \Delta }
+\frac{1}{C_{r}^d \Delta }\right),
\end{eqnarray*}
$ \Delta $ and $ \bm\alpha $ are from (\ref{al})
and $m$ is an arbitrary natural number such that $r-d+2 \leqslant m \leqslant n-d$. 
The optimal value for $m$ is defined by (\ref{mop})
provided $ (r-d) s_1(j) -(d+1) s_2(j) < 0$ and it is $r-d+2$ or $n-d$ otherwise.

If  $r-d \geqslant 2$ and $n-r \geqslant 2$, then
\begin{eqnarray}
\label{Pr3ub}
P_r \leqslant 
\sum_{j \in J_d} \min\left\{ {\bm\alpha}^{T} \mathbf{s}(j),
{\bm\beta}^{T} \mathbf{s}(j), {\bm\gamma}^{T} \mathbf{s}(j)
\right\} \;\mbox{and}\;\;
p_r \leqslant 
\sum_{j \in J_d} \min\left\{ {\bm\alpha}^{T} \mathbf{s}(j),
{\bm\delta}^{T} \mathbf{s}(j) 
\right\}.
\end{eqnarray}

Define $s_1^\mathcal{A}(j)$, $s_2^\mathcal{A}(j)$ and $s_3^\mathcal{A}(j)$ for $j\in J_d$  by 
the right-hand side of relation (\ref{sk}) with $\mathbf{P}(\cdot)$ replaced
by $\mathbf{P}(\cdot|\mathcal{A})$. Inequalities (\ref{Pr3ub1}), (\ref{Pr3ub2})--(\ref{Pr3ub}) hold a.s.
provided one replaced $P_r$ and $p_r$ by  $P_r^{\mathcal{A}}$ and $p_r^{\mathcal{A}}$ and
$s_1(j)$, $s_2(j)$ and $s_3(j)$ by
$s_1^\mathcal{A}(j)$, $s_2^\mathcal{A}(j)$ and $s_3^\mathcal{A}(j)$
correspondingly. In this case, optimal $m$ are random variables.

\end{theorem}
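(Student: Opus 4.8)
The plan is to repeat, with $\ell=3$, the argument used for Theorem~\ref{th2}, combining the decomposition $(\ref{40})$ with Theorem~\ref{th1}. First I would put $\mathbf b=\mathbf F^{T}\mathbf a$ with $\mathbf F$ from $(\ref{f})$ and a generic $\mathbf a=(a_{1},a_{2},a_{3})^{T}$, and factor the entries as
$$
b_{u}=C_{u+d-1}^{d}\,Q(u),\qquad
Q(u)=a_{1}+\frac{u-1}{d+1}\,a_{2}+\frac{(u-1)(u-2)}{(d+1)(d+2)}\,a_{3},
$$
using $C_{u+d-1}^{d+1}/C_{u+d-1}^{d}=(u-1)/(d+1)$ and $C_{u+d-1}^{d+2}/C_{u+d-1}^{d}=(u-1)(u-2)/((d+1)(d+2))$. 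Extending to real $u$, $b(u)=C_{u+d-1}^{d}Q(u)$ is a polynomial of degree $d+2$ whose first factor is positive on $(0,\infty)$ and vanishes at $0,-1,\dots,-(d-1)$; between these $d$ zeros $b$ has at least $d-1$ local extrema, so, having at most $d+1$ of them in all, $b$ has \emph{at most two local extrema on the positive half-line}. This replaces the ``at most one local extreme'' fact used for $\ell=2$, and it is what forces the $\ell=3$ selections to involve pairs of consecutive indices.

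Next I would run through the two choices of $\mathbf v$ of the proof of Theorem~\ref{th2} (tail $\mathbf 1_{n-r+1}$ for $P_{r}$, single $1$ in position $r-d+1$ for $p_{r}$), so that the admissible $\mathbf v_{\mathbf i}\in\mathbb R^{3}$ are $(0,0,0)^{T},(0,0,1)^{T},(0,1,1)^{T},(1,1,1)^{T}$ for $P_{r}$ and $(0,0,0)^{T},(0,0,1)^{T},(0,1,0)^{T},(1,0,0)^{T}$ for $p_{r}$. The all-zero pattern gives $\mathbf a=\mathbf 0_{3}$ and no bound. For each remaining pattern I would impose $\mathbf b\geqslant\mathbf v$ (condition of Theorem~\ref{th1} for upper bounds) and use the extremum count to pin down $\mathbf i$. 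The typical mechanism is: if the pattern forces $b$ to equal a common value at three (resp. two) abscissas lying where that value must be a local minimum of $b$, those abscissas must be consecutive integers, for otherwise $b$ would acquire three positive local extrema; and if $b$ must be monotone on a tail of $\{1,\dots,n-d+1\}$, the remaining free zero of $b$ is pushed to an endpoint ($u=1$ or $u=n-d+1$). I expect this to leave exactly the selections $\mathbf i=(m,m+1,r-d+1)$ with $1\leqslant m\leqslant r-d-1$ (pattern $(0,0,1)$, needing $r-d\geqslant 2$), $\mathbf i=(1,r-d+1,n-d+1)$ (patterns $(0,1,1)$ for $P_{r}$ and $(0,1,0)$ for $p_{r}$, needing $r-d\geqslant 1$, $n-r\geqslant 1$), and $\mathbf i=(r-d+1,m,m+1)$ with $r-d+2\leqslant m\leqslant n-d$ (pattern $(1,1,1)$ for $P_{r}$ and $(1,0,0)$ for $p_{r}$, needing $n-r\geqslant 2$), together with a check — using again the sign of $C_{u+d-1}^{d}$ on $(0,\infty)$ and the monotonicity just described — that the resulting $b$ in fact satisfies $\mathbf b\geqslant\mathbf v$ at every integer of $\{1,\dots,n-d+1\}$, not only at the interpolation nodes.

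For each feasible $\mathbf i$ I would then solve $(\ref{20})$, i.e. $\mathbf F_{\mathbf i}^{T}\mathbf a=\mathbf v_{\mathbf i}$: this is Lagrange interpolation of the quadratic $Q$ through the three points $(i_{q},(\mathbf v_{\mathbf i})_{q}/C_{i_{q}+d-1}^{d})$, after which $a_{1},a_{2},a_{3}$ are read off from the displayed identity for $Q(u)$. Substituting into $Z^{\ast}(j)=\mathbf s(j)^{T}\mathbf a$ of Theorem~\ref{th1} and summing over $j\in J_{d}$ via $(\ref{40})$ gives $(\ref{Pr3ub1})$ with $\bm\alpha$ from $(\ref{al})$ (for $P_{r}$ and $p_{r}$ from $\mathbf i=(m,m+1,r-d+1)$, and in addition for $p_{r}$ from $\mathbf i=(r-d+1,m,m+1)$, which yields $(\ref{Pr3ub3})$ for $p_{r}$), $(\ref{Pr3ub2})$ with $\bm\beta$ and $\bm\delta$ from $\mathbf i=(1,r-d+1,n-d+1)$, and $(\ref{Pr3ub3})$ with $\bm\gamma$ from $\mathbf i=(r-d+1,m,m+1)$ for $P_{r}$. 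For the parameter $m$ I would optimize exactly as in Theorem~\ref{th2}: by $(\ref{30})$ the tightness vector has relevant subvector $\mathbf z_{\mathbf i}^{\ast}(j)=\mathbf F_{\mathbf i}^{-1}\mathbf s(j)$, and requiring its coordinates to be non-negative gives precisely the values of $m$ described by $(\ref{mop})$ (the sign of $(r-d)s_{1}(j)-(d+1)s_{2}(j)$ deciding the direction, and the endpoints $1$, $r-d-1$ or $r-d+2$, $n-d$ taken when $(\ref{mop})$ has no admissible solution). Taking, when $r-d\geqslant 2$ and $n-r\geqslant 2$ so that several of these apply at once, the minimum of the applicable bounds produces $(\ref{Pr3ub})$. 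The conditional version follows as at the end of the proof of Theorem~\ref{th2}: fix a version of each of the finitely many random variables involved, note that off one null set the identities of Lemma~\ref{L1} hold pointwise and $\mathbf z^{\mathcal A}(j)\geqslant\mathbf 0$ (the only probabilistic input Theorem~\ref{th1} uses), run the deterministic argument verbatim, and use finiteness of the collection to absorb changes of version into a null set; the optimal $m$ then becomes a function of the $s_{k}^{\mathcal A}(j)$, i.e. a random variable.

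The main obstacle is the case analysis of the second paragraph: proving that the patterns $(0,0,1),(0,1,1),(1,1,1)$ (and their $p_{r}$ counterparts) admit only the listed selections of $\mathbf i$, which requires tracking, in each case, not just the nodes of $\mathbf i$ but also the extra roots of $Q$ and the resulting local extrema of $b$ on $(-\infty,\infty)$, and then confirming the sign of $b$ at every integer of the range. Once that is done, everything reduces — as for $\ell=2$ — to a $3\times 3$ Cramer/interpolation computation and the same optimization-of-$m$ bookkeeping.
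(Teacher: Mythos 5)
Your proposal follows essentially the same route as the paper's own proof: the factorization $b_u=C_{u+d-1}^{d}Q(u)$ with the quadratic $Q$, the count of at most two local extrema of $b(u)$ on the positive half-line, the case analysis over the admissible patterns of $\mathbf v_{\mathbf i}$ leading to the selections $\mathbf i=(m,m+1,r-d+1)$, $(1,r-d+1,n-d+1)$ and $(r-d+1,m,m+1)$, the solution of $\mathbf F_{\mathbf i}^{T}\mathbf a=\mathbf v_{\mathbf i}$ giving $\bm\alpha,\bm\beta,\bm\delta,\bm\gamma$, the optimization of $m$ through non-negativity of $\mathbf z_{\mathbf i}^{\ast}(j)=\mathbf F_{\mathbf i}^{-1}\mathbf s(j)$, the minimum over the applicable bounds, and the fixed-version argument for the conditional case. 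This matches the paper's proof in structure and detail, so it is correct as an outline of that argument.
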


\begin{proof}


For $\mathbf{b} = \mathbf{F}^T \mathbf{a}$, we have
$$ b_u = C_{u+d-1}^{d} a_1 + C_{u+d-1}^{d+1} a_2 + C_{u+d-1}^{d+2} a_3
= C_{u+d-1}^d Q(u) = \frac{u (u+1)\cdot \ldots \cdot (u-d+1)}{d!} Q(u)
$$ 
for $ u=1,2,\ldots,n-d+1,$ where
$$ Q(u) = a_1 + \frac{u-1}{d+1} a_2 + \frac{(u-1)(u-2)}{(d+1)(d+2)} a_3. 
$$
Put 
\begin{eqnarray}\label{bu}
b(u) =  \frac{u (u+1)\cdot \ldots \cdot (u-d+1)}{d!} Q(u),\quad u \in \mathbb{R}.
\end{eqnarray}
It is clear that $b(u)=b_u$ for natural $u$ and $b(u)$ is a polynomial with degree $d+2$ (or less)
and $d$ zeros at $0, -1, \ldots, -(d-1)$. It follows that $b(u)$ has $d-1$ local extremes
on negative half-line and it can have no more than two extremes on positive half-line. 
This will repeatedly be used to find variants for $\mathbf{i}$.

We consider the following two variants of vector $\mathbf{v}$:
\begin{eqnarray}\label{vv}
\left\lgroup \begin{matrix}
\mathbf{0}_{r-d} \cr \mathbf{1}_{n-r+1} 
\end{matrix} \right\rgroup
\quad\mbox{and}\quad
\left\lgroup \begin{matrix}
\mathbf{0}_{r-d} \cr 1 \cr \mathbf{0}_{n-r} 
\end{matrix} \right\rgroup
\end{eqnarray}
to obtain bounds for $P_r$ and $p_r$, correspondingly. 
Then vector $\mathbf{v}_\mathbf{i}$ can only be as follows:
\begin{eqnarray}\label{ad}
 a) \left\lgroup \begin{matrix}
0 \cr 0 \cr 0
\end{matrix} \right\rgroup\!, \;\; 
 b) \left\lgroup \begin{matrix}
0 \cr 0 \cr 1
\end{matrix} \right\rgroup\!, \;\; 
c) \left\lgroup \begin{matrix}
0 \cr 1 \cr 1
\end{matrix} \right\rgroup\!, \;\;
d) \left\lgroup \begin{matrix}
1 \cr 1 \cr 1
\end{matrix} \right\rgroup
\;\; \mbox{and} \;\;
a) \left\lgroup \begin{matrix}
0 \cr 0 \cr 0
\end{matrix} \right\rgroup\!, \;\; 
b)  \left\lgroup \begin{matrix}
0 \cr 0 \cr 1
\end{matrix} \right\rgroup\!, \;\;
c) \left\lgroup \begin{matrix}
0 \cr 1 \cr 0
\end{matrix} \right\rgroup\!, \;\;
d) \left\lgroup \begin{matrix}
1 \cr 0 \cr 0
\end{matrix} \right\rgroup
\end{eqnarray}
for $P_r$ and $p_r$, correspondingly. We will show below that 
$\mathbf{i}$ is the same for $P_r$ and $p_r$ in cases a), b), c) and d),
correspondingly. Therefore, we deal with each case from a)--d) for $P_r$ and $p_r$ simultaneously.

In case of $P_r$, coefficients $a_1$, $a_2$ and $a_3$ have to be such that
$b_u \geqslant 0$ for  $u \leqslant r-d$ and $b_u \geqslant 1$ for $u \geqslant r-d+1$.
In case of $p_r$, we need  $b_u \geqslant 0$ for  $u \neq r-d+1$ and
$b_{r-d+1} \geqslant 1$.
This follows from condition $\mathbf{b}\geqslant \mathbf{v}$ of Theorem \ref{th1}.

a) In this case, $a_1=a_2=a_3=0$ and, therefore, $b_u=0$ for all $u$.
Hence, we have no bounds for $P_r$ and $p_r$ both. 

b) For $P_r$, we have
$b_{i_1}=b_{i_2}=0$ and $b_{i_3}=1$ for some $i_1<i_2 \leqslant r-d < i_3 \leqslant n-d+1$.
Then there are two extremes of $b(u)$ on intervals $(0,i_1)$ and $(i_1,i_2)$. Hence, $b(u)$ is
strictly increasing on $(i_2, i_3)$. This yields that $i_3=r-d+1$. Moreover,
$b(u)$ has a local minimum on $(i_1,i_2)$ and this minimum is negative. It follows
that $i_2=i_1+1$. This choice of $i_1, i_2, i_3$ only implies
that $b_u \geqslant 0$ for  $u \leqslant r-d$ and
$b_u \geqslant 1$ for $u \geqslant r-d+1$. For $p_r$, we have the same option of $\mathbf{i}$.
Since $\mathbf{v}_{\mathbf{i}}$ is the same for $P_r$ and $p_r$ both, 
bounds for $P_r$ and $p_r$ coincide.

Put $\mathbf{i}=(m,m+1,r-d+1)$, where $1 \leqslant m \leqslant r-d-1$. 
Here $m$ is a parameter which can be chosen to optimize bounds below.

Then
$$ \mathbf{F}_\mathbf{i} = \left\lgroup \begin{matrix}
C_{m+d-1}^{d} & C_{m+d}^{d} & C_r^{d} \cr
C_{m+d-1}^{d+1} & C_{m+d}^{d+1} & C_r^{d+1} \cr
C_{m+d-1}^{d+2} & C_{m+d}^{d+2} & C_r^{d+2} 
\end{matrix} 
\right\rgroup \quad\mbox{and}\quad
\mathbf{v}_\mathbf{i} = \left\lgroup \begin{matrix}
0 \cr 0 \cr 1
\end{matrix} \right\rgroup.
$$

Note that 
$$ \mathbf{F}^{-1}_\mathbf{i} = \left\lgroup \begin{matrix}
\frac{m(r-d)(r-d-m)}{C_{m+d-1}^d \Delta }& 
-\frac{(r-d-m)(r-d+m-1)(d+1)}{C_{m+d-1}^d \Delta }& 
\frac{(r-d-m)(d+1)(d+2)}{C_{m+d-1}^d \Delta } \cr
 - \frac{(m-1)(r-d)(r-d-m+1)}{C_{m+d}^d \Delta }& 
 \frac{(r-d-m+1)(r-d+m-2)(d+1)}{C_{m+d}^d \Delta }& 
- \frac{(r-d-m+1)(d+1)(d+2)}{C_{m+d}^d \Delta } \cr
\frac{m(m-1)}{C_{r}^d \Delta} & 
-\frac{2(m-1)(d+1)}{C_{r}^d \Delta } &
\frac{(d+1)(d+2)}{C_{r}^d \Delta }  
\end{matrix} 
\right\rgroup. 
$$
Then the solution of system (\ref{20}) is $ \mathbf{a}=\bm{\alpha} $
and we get (\ref{Pr3ub1}).

Turn to an optimization over $m$.  By (\ref{30}), we have
$\mathbf{z}^*_{\mathbf{i}}(j) = \mathbf{F}^{-1}_\mathbf{i} \mathbf{s}(j)$.
Hence, we get
\begin{eqnarray*}
&& \hspace*{-\parindent}
z^*_{m}(j) = \frac{r-d-m}{C_{m+d-1}^d \Delta } \left(
m(r-d) s_1(j) -(r-d+m-1)(d+1) s_2(j) + (d+1)(d+2) s_3(j) \right),
\\ && \hspace*{-\parindent}
z^*_{m+1}(j) = \frac{r-d-m+1}{C_{m+d}^d \Delta } \left(
 - (m-1)(r-d) s_1(j)+  \right.
\\ && \hspace*{9\parindent} \left. +
 (r-d+m-2)(d+1) s_2(j) - (d+1)(d+2) s_3(j) \right), 
\\ && \hspace*{-\parindent}
z^*_{r-d+1}(j) = \frac{1}{C_{r}^d \Delta} \left(
m(m-1) s_1(j) -2(m-1)(d+1) s_2(j) + (d+1)(d+2) s_3(j) \right).
\end{eqnarray*}
The inequalities $ z^*_{m}(j)\geqslant 0 $ and $ z^*_{m+1}(j)\geqslant 0 $ give (\ref{mop})
provided 
$ (r-d) s_1(j)- (d+1) s_2(j) > 0$.



c) For $P_r$, we have $b_{i_1}=0$ and $b_{i_2}=b_{i_3}=1$ for some $1\leqslant i_1 \leqslant r-d$ 
and $r-d+1 \leqslant i_2 < i_3 \leqslant n-d+1$.
Then $b(u)$ has local extremes on $(0,i_1)$ and $(i_2, i_3)$.
Hence, $b(u)$ is strictly increasing on $(i_1, i_2)$. It follows that
$i_2=r-d+1$. Moreover, $b(u)$ has a local minimum on 
$(0,i_1)$ and this minimum is negative. Then $i_1=1$. Further,
$b(u)$ has a local maximum at $u_0\in (i_2,i_3)$ and this maximum is greater than 1.
For $u \geqslant u_0$, $b(u)$ is strictly decreasing. It yields that
$i_3=n-d+1$. This choice of $i_1, i_2, i_3$  only implies
that $b_u \geqslant 0$ for  $u \leqslant r-d$ and
$b_u \geqslant 1$ for $u \geqslant r-d+1$. 

For $p_r$, we have $b_{i_1}=0$,
$b_{r-d+1}=1$ and $b_{i_3}=0$ for some $1\leqslant i_1 \leqslant r-d$ and
$r-d+1  < i_3 \leqslant n-d+1$.
Then $b(u)$ has local extremes on $(0,i_1)$ and $(i_1, i_3)$.
Moreover, $b(u)$ has a local minimum on 
$(0,i_1)$ and this minimum is negative. Therefore $i_1=1$. Further,
$b(u)$ has a local maximum on $(i_1,i_3)$ and this maximum is greater or equal to 1.
It yields that $i_3=n-d+1$. This choice of $i_1$ and $ i_3$  only yields
that $b_u \geqslant 0$ for  $u \neq r-d+1$ and
$b_{r-d+1} \geqslant 1$. 

It follows that $\mathbf{i}$ is the same, but $\mathbf{v}_{\mathbf{i}}$ in  (\ref{20})
is different for $P_r$ and $p_r$.

Put $\mathbf{i}=(1,r-d+1,n-d+1)$. Then
$$ \mathbf{F}_\mathbf{i} = \left\lgroup \begin{matrix}
1 & C_{r}^d & C_n^d\cr  0 & C_{r}^{d+1} & C_n^{d+1} \cr  0 & C_{r}^{d+2} & C_n^{d+2}
\end{matrix} 
\right\rgroup, \quad
\mathbf{v}_\mathbf{i} = \left\lgroup \begin{matrix}
0 \cr 1 \cr 1
\end{matrix} \right\rgroup
\quad \mbox{or}\quad
\mathbf{v}_\mathbf{i} = \left\lgroup \begin{matrix}
0 \cr 1 \cr 0
\end{matrix} \right\rgroup
$$
for $P_r$ and $p_r$, correspondingly.

The solutions of system (\ref{20}) are $  \mathbf{a} =\bm{\beta} $
for $P_r$ and $  \mathbf{a} =\bm{\delta} $ for $p_r$.
Then we obtain (\ref{Pr3ub2}).

d) In case of $P_r$, we have  $b_{i_1}=b_{i_2}=b_{i_3}=1$ for some 
$r-d+1 \leqslant i_1< i_2 < i_3 \leqslant n-d+1$. 
Then $b(u)$ has local extremes on $(i_1,i_2)$ and $(i_2, i_3)$.
Hence, $b(u)$ is strictly increasing on $(0, i_1)$. 
It follows that $i_1=r-d+1$. 
Moreover, $b(u)$ has a local maximum on  $(i_1,i_2)$ and 
a local minimum on $(i_2,i_3)$. This local minimum is less than 1.
Hence $i_2=i_1+1$.
This choice of $i_1, i_2, i_3$ only yields
that $b_u \geqslant 0$ for  $u \leqslant r-d$ and
$b_u \geqslant 1$ for $u \geqslant r-d+1$. 

For $p_r$, we have 
$b_{r-d+1}=1$ and $b_{i_2}=b_{i_3}=0$ for some 
$r-d+1 < i_2 < i_3 \leqslant n-d+1$.
Then $b(u)$ has local extremes on $(0,i_2)$ and $(i_2, i_3)$.
Moreover, $b(u)$ has a local minimum on 
$(i_2,i_3)$ and this minimum is negative. Therefore $i_3=i_2+1$. Further,
$b(u)$ has a local maximum on $(0,i_2)$ and this maximum is greater or equal to 1.
This choice of $i_1$ and $ i_3$ only gives $b_u \geqslant 0$ for  $u \neq r-d+1$ and
$b_{r-d+1} \geqslant 1$. 

It follows that $\mathbf{i}$ is the same, but $\mathbf{v}_{\mathbf{i}}$ in  (\ref{20}) is different for
$P_r$ and $p_r$.

Put $\mathbf{i}=(r-d+1,m,m+1)$, where $r-d+2 \leqslant m \leqslant n-d$.
Then
$$ \mathbf{F}_\mathbf{i} = \left\lgroup \begin{matrix}
 C_r^{d} & C_{m+d-1}^{d} & C_{m+d}^{d}  \cr
 C_r^{d+1} & C_{m+d-1}^{d+1} & C_{m+d}^{d+1} \cr
 C_r^{d+2} & C_{m+d-1}^{d+2} & C_{m+d}^{d+2}  
\end{matrix} \right\rgroup, \quad 
\mathbf{v}_\mathbf{i} = \left\lgroup \begin{matrix}
1 \cr 1 \cr 1
\end{matrix} \right\rgroup
 \quad\mbox{or}\quad
\mathbf{v}_\mathbf{i} = \left\lgroup \begin{matrix}
1 \cr 0 \cr 0
\end{matrix} \right\rgroup
$$
for $P_r$ and $p_r$, correspondingly.

Note that 
$$ \mathbf{F}^{-1}_\mathbf{i} = \left\lgroup \begin{matrix}
\frac{m(m-1)}{C_{r}^d \Delta} & 
-\frac{2(m-1)(d+1)}{C_{r}^d \Delta } &
\frac{(d+1)(d+2)}{C_{r}^d \Delta }  \cr
 \frac{m(r-d)(r-d-m)}{C_{m+d-1}^d \Delta }& 
-\frac{(r-d-m)(r-d+m-1)(d+1)}{C_{m+d-1}^d \Delta }& 
\frac{(r-d-m)(d+1)(d+2)}{C_{m+d-1}^d \Delta } \cr
 - \frac{(m-1)(r-d)(r-d-m+1)}{C_{m+d}^d \Delta }& 
 \frac{(r-d-m+1)(r-d+m-2)(d+1)}{C_{m+d}^d \Delta }& 
- \frac{(r-d-m+1)(d+1)(d+2)}{C_{m+d}^d \Delta } 
\end{matrix} 
\right\rgroup. 
$$

The solutions of system (\ref{20}) are $ \mathbf{a}=\bm{\gamma} $ 
and $ \mathbf{a}=\bm{\alpha} $ and we get (\ref{Pr3ub3}).

One can check that $\gamma_1=1$ and $\gamma_2=\gamma_3=0$ for $d=0$ and
$\gamma_1>0$, $\gamma_2<0$ and $\gamma_3>0$ for $d>0$. 

Make an optimization over $m$.  By (\ref{30}), we have
$\mathbf{z}^*_{\mathbf{i}}(j) = \mathbf{F}^{-1}_\mathbf{i} \mathbf{s}(j)$.
Hence,
\begin{eqnarray*}
&& \hspace*{-\parindent}
z^*_{r-d+1}(j) = \frac{1}{C_{r}^d \Delta} \left(
m(m-1) s_1(j) -2(m-1)(d+1) s_2(j) + (d+1)(d+2) s_3(j) \right),  
\\ && \hspace*{-\parindent}
z^*_{m}(j) = \frac{r-d-m}{C_{m+d-1}^d \Delta } \left(
m(r-d) s_1(j) -(r-d+m-1)(d+1) s_2(j) + (d+1)(d+2) s_3(j) \right),
\\ && \hspace*{-\parindent}
z^*_{m+1}(j) = \frac{r-d-m+1}{C_{m+d}^d \Delta } \left(
 - (m-1)(r-d) s_1(j)+  
\right. \\ && \hspace*{9\parindent} \left. + 
 (r-d+m-2)(d+1) s_2(j) - (d+1)(d+2) s_3(j) \right).
\end{eqnarray*}
The inequalities $ z^*_{m}(j)\geqslant 0 $ and $ z^*_{m+1}(j)\geqslant 0 $ yield (\ref{mop})
provided 
$ (d+1) s_2(j) - (r-d) s_1(j)>0 $.


Inequalities (\ref{Pr3ub1})--(\ref{Pr3ub3}) were derived by an estimation of items of the decomposition
of $P_r$ and $p_r$ for every $j$. It is clear that every such items can be underestimated
by a minimum of three bounds considered above. So, inequality (\ref{Pr3ub}) follows.

For conditional probabilities, the argument is the same as that in Theorem \ref{th2}.
Hence, we omit details.
\end{proof}

Now, we turn to lower bounds for probabilities of combinations of events.
To this end, we have the next result.

\begin{theorem}\label{th4}
Assume that $\ell=3$ and $\mathbf{s}(j) = \mathbf{F} \mathbf{z}(j)$ for all $j\in J_d$
with $\mathbf{F}$ from (\ref{f}). (In this case, relation (\ref{sk}) holds for
$s_1(j)$, $s_2(j)$ and $s_3(j)$, $j\in J_d$.) 

If $r-d \geqslant 2$, 
then
\begin{eqnarray}
\label{Pr3lb1}
P_r \geqslant  \sum_{j \in J_d} {\bm\alpha}^{T} \mathbf{s}(j)\quad\mbox{and}\quad
p_n \geqslant  \sum_{j \in J_d} {\bm\delta}^{T} \mathbf{s}(j),
\end{eqnarray}
where
$$ \bm\alpha = \frac{1}{\Delta_2}
\left\lgroup \begin{matrix}
0 \cr -C_{r-1}^{d+2} \cr C_{r-1}^{d+1} 
\end{matrix} \right\rgroup,\quad \Delta_2= C_n^{d+2} C_{r-1}^{d+1} - C_n^{d+1} C_{r-1}^{d+2}
$$
and $\bm \delta = \bm \alpha$ 
with $r=n$.

If $r-d \geqslant 1$ and $n-r \geqslant 1$, then
\begin{eqnarray}
\label{Pr3lb2}
P_r \geqslant  \sum_{j \in J_d} {\bm\beta}^{T} \mathbf{s}(j),
\quad\mbox{and}\quad
p_r  \geqslant  \sum_{j \in J_d} {\bm\theta}^{T} \mathbf{s}(j),
\end{eqnarray}
where 
\begin{eqnarray*}
\bm\theta = \frac{1}{C_{r}^d  }
\left\lgroup \begin{matrix}
 - (r-d+1)(r-d-1)\cr
(d+1) (2(r-d)-1)\cr
-(d+1)(d+2)
\end{matrix} \right\rgroup
\end{eqnarray*}
and $ {\bm\beta} =(\beta_1,\beta_2, \beta_3)^T $ with
\begin{eqnarray*}
&& \beta_1= \frac{m(r-d-1)(r-d-m-1)}{C_{m+d-1}^d \Delta_3 }-\frac{(m-1)(r-d-1)(r-d-m)}{C_{m+d}^d \Delta_3 }
,\\
&& \beta_2= (d+1)\left(
-\frac{(r-d-m-1)(r-d+m-2)}{C_{m+d-1}^d \Delta_3 } + \frac{(r-d-m)(r-d+m-3)}{C_{m+d}^d \Delta_3 }
\right),\\
&& \beta_3= (d+1)(d+2)\left(\frac{r-d-m-1}{C_{m+d-1}^d \Delta_3 } - \frac{r-d-m}{C_{m+d}^d \Delta_3 }
\right),
\end{eqnarray*}
 $\Delta_3 = (r-d-m-1)(r-d-m)$
and $m$ is an arbitrary natural number such that $r-d+1 \leqslant m \leqslant n-d$. 
Optimal value of $m$ is
\begin{eqnarray}
\label{mop1}
m-1  \leqslant (d+1)
\frac{ (d+2) s_3(j)-(r-d-2) s_2(j)}{(d+1) s_2(j) -(r-d-1) s_1(j) } \leqslant m
\end{eqnarray}
provided $ (d+1) s_2(j) - (r-d-1) s_1(j)>0 $
and $r-d+1 $ or $n-d$ otherwise.

If 
$n-r \geqslant 2$ and $r=d$, then
\begin{eqnarray}
\label{Pr3lb3}
P_d \geqslant  \sum_{j \in J_d} {\bm\gamma}^{T} \mathbf{s}(j)
\quad\mbox{and}\quad
p_d \geqslant  \sum_{j \in J_d} {\bm\varphi}^{T} \mathbf{s}(j),
\end{eqnarray}
where  
\begin{eqnarray*}
\bm\varphi = 
\left\lgroup \begin{matrix}
 1 \cr
-(d+1) \cr
\frac{(d+1)(d+2)}{n-d}
\end{matrix} \right\rgroup,
\end{eqnarray*}
$ {\bm\gamma} = (\gamma_{1}, \gamma_{2}, \gamma_{3})^T $ with
\begin{eqnarray*}
&& 
\hspace*{-\parindent}
\gamma_1= \frac{m(n-d)(n-d-m)}{C_{m+d-1}^d \Delta_4 } - \frac{(m-1)(n-d)(n-d-m+1)}{C_{m+d}^d \Delta_4 }
+\frac{m(m-1)}{C_{n}^d \Delta_4 },\\
&& 
\hspace*{-\parindent}
\gamma_2= (d+1)\left(
-\frac{(n-d-m)(n-d+m-1)}{C_{m+d-1}^d \Delta_4 } + \frac{(n-d-m+1)(n-d+m-2)}{C_{m+d}^d \Delta_4 }
-\frac{2(m-1)}{C_{n}^d \Delta_4 }\right),\\
&& 
\hspace*{-\parindent}
\gamma_3= (d+1)(d+2)\left(
\frac{n-d-m}{C_{m+d-1}^d \Delta_4 } - \frac{n-d-m+1}{C_{m+d}^d \Delta_4 }
+\frac{1}{C_{n}^d \Delta_4 }\right),
\end{eqnarray*}
$\Delta_4 = (n-d-m)(n-d-m+1)$  and $m$ is an arbitrary natural number such that 
$1 \leqslant m \leqslant n-d-1$.
Optimal value of $m$ is
\begin{eqnarray}
\label{mop2}
 m-1  \leqslant (d+1)
\frac{(n-d-1) s_2(j)- (d+2) s_3(j)}{ (n-d) s_1(j) -(d+1) s_2(j)} \leqslant m
\end{eqnarray}
provided $(n-d) s_1(j)-(d+1) s_2(j)>0 $
and $1$ or $n-d-1$ otherwise.

If  $r-d \geqslant 2$ and $n-r \geqslant 1$, then
\begin{eqnarray}
\label{Pr3lb}
P_r \geqslant 
\sum_{j \in J_d} \max\left\{ {\bm\alpha}^{T} \mathbf{s}(j),
{\bm\beta}^{T} \mathbf{s}(j), {\bm\gamma}^{T} \mathbf{s}(j)
\right\}.
\end{eqnarray}

Define $s_1^\mathcal{A}(j)$, $s_2^\mathcal{A}(j)$ and $s_3^\mathcal{A}(j)$ for $j\in J_d$  by 
the right-hand side of relation (\ref{sk}) with $\mathbf{P}(\cdot)$ replaced
by $\mathbf{P}(\cdot|\mathcal{A})$. Inequalities (\ref{Pr3lb1}), (\ref{Pr3lb2}),
(\ref{Pr3lb3}) and (\ref{Pr3lb}) hold a.s.
provided one replaced $P_r$ and $p_r$ by  $P_r^{\mathcal{A}}$ and $p_r^{\mathcal{A}}$ and
$s_1(j)$, $s_2(j)$ and $s_3(j)$ by
$s_1^\mathcal{A}(j)$, $s_2^\mathcal{A}(j)$ and $s_3^\mathcal{A}(j)$
correspondingly. In this case, optimal $m$ are random variables.
\end{theorem}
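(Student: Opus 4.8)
\section*{Proof proposal}

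The plan is to repeat, once more, the scheme used in the proofs of Theorems \ref{th2} and \ref{th3}, this time invoking the second implication of Theorem \ref{th1}: if $\mathbf{b}=\mathbf{F}^{T}\mathbf{a}\leqslant\mathbf{v}$, then $Z\geqslant Z^{*}$. Writing $b_u=C_{u+d-1}^{d}a_1+C_{u+d-1}^{d+1}a_2+C_{u+d-1}^{d+2}a_3$ and passing to the polynomial $b(u)$ of (\ref{bu}), one uses exactly as before that $b(u)$ has degree at most $d+2$ with $d$ zeros at $0,-1,\dots,-(d-1)$, hence at most two local extremes on the positive half-line. I take the two vectors $\mathbf{v}$ of (\ref{vv}), so $\mathbf{v}_{\mathbf{i}}$ is one of the four columns in (\ref{ad}), and treat cases a)--d) for $P_r$ and $p_r$ simultaneously. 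The restriction $\mathbf{b}\leqslant\mathbf{v}$ now means, for $P_r$, that $b_u\leqslant 0$ for $u\leqslant r-d$ and $b_u\leqslant 1$ for $u\geqslant r-d+1$, and, for $p_r$, that $b_u\leqslant 0$ for $u\neq r-d+1$ together with $b_{r-d+1}\leqslant 1$.

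Case a) yields only the trivial bound by $0$. In case b) the shape of $b(u)$ forces $\mathbf{i}=(1,r-d,n-d+1)$ with $\mathbf{v}_{\mathbf{i}}=(0,0,1)^{T}$; then $a_1=0$ because the first column of $\mathbf{F}_{\mathbf{i}}$ equals $(1,0,0)^{T}$, and solving the remaining $2\times 2$ system gives $\mathbf{a}=\bm\alpha$ of (\ref{Pr3lb1}), which requires $r-d\geqslant 2$ for the three indices to be distinct; the same configuration survives for $p_r$ only when $r=n$, in which case $\mathbf{a}=\bm\delta=\bm\alpha|_{r=n}$. In case c) the shape of $b(u)$ forces, for $P_r$, the tuple $\mathbf{i}=(r-d,m,m+1)$ with a free $m$ subject to $r-d+1\leqslant m\leqslant n-d$ (whence $r-d\geqslant 1$ and $n-r\geqslant 1$) and $\mathbf{v}_{\mathbf{i}}=(0,1,1)^{T}$, so $\mathbf{a}$ is the sum of the last two rows of $\mathbf{F}_{\mathbf{i}}^{-1}$, which is $\bm\beta$; for $p_r$ one gets the consecutive triple $\mathbf{i}=(r-d,r-d+1,r-d+2)$ with $\mathbf{v}_{\mathbf{i}}=(0,1,0)^{T}$, so $\mathbf{a}$ is the second row of $\mathbf{F}_{\mathbf{i}}^{-1}$, which is $\bm\theta$; this is (\ref{Pr3lb2}). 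In case d) all three prescribed values being $b_{i_q}=1$ forces $r=d$, and then $\mathbf{i}=(m,m+1,n-d+1)$ with $1\leqslant m\leqslant n-d-1$ (whence $n-r\geqslant 2$) for $P_d$, giving $\mathbf{a}=\bm\gamma$, while for $p_d$ one gets $\mathbf{i}=(1,2,n-d+1)$, giving $\mathbf{a}=\bm\varphi$; this is (\ref{Pr3lb3}). In every case I then turn $Z(j)\geqslant Z^{*}(j)=(\mathbf{s}(j))^{T}\mathbf{a}$ into the asserted sums through the decompositions $P_r=\sum_{j\in J_d}Z(j)$ and $p_r=\sum_{j\in J_d}Z(j)$ of Lemma \ref{L1}.

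To optimize the free parameter $m$ in cases c) and d) I compute $\mathbf{z}^{*}_{\mathbf{i}}(j)=\mathbf{F}_{\mathbf{i}}^{-1}\mathbf{s}(j)$ as in (\ref{30}); its two $m$-dependent coordinates are linear combinations of $s_1(j),s_2(j),s_3(j)$, and imposing their non-negativity gives, under the stated sign of the denominator, precisely the double inequalities (\ref{mop1}) and (\ref{mop2}), the endpoints of the admissible $m$-range being used when the sign condition fails. Inequality (\ref{Pr3lb}) is then immediate, since each summand $Z(j)$ of the decomposition of $P_r$ is bounded below by every applicable one of $\bm\alpha^{T}\mathbf{s}(j)$, $\bm\beta^{T}\mathbf{s}(j)$ and $\bm\gamma^{T}\mathbf{s}(j)$, hence by their maximum. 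Finally, the conditional statements follow verbatim from the end of the proof of Theorem \ref{th2}: one fixes a version of each of the finitely many random variables occurring in the inequalities, runs the purely numerical argument above pointwise outside a common null set, and concludes that the inequalities hold a.s., the optimal $m$ now being random variables.

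The main obstacle is the case analysis determining $\mathbf{i}$. One has to rule out, in each of cases b)--d), every index configuration except the listed one, using only the bound ``at most two positive local extremes of $b(u)$'' together with the one-sided inequalities on $b_u$, and to read off from it the exact side conditions ($r-d\geqslant 2$ in case b), $r=n$ for the $p_r$ bound there, $r-d\geqslant 1$ and $n-r\geqslant 1$ in case c), $r=d$ and $n-r\geqslant 2$ in case d)) and the admissible ranges of $m$. A routine accompanying point is to check that the determinants $\Delta_2$, $\Delta_3$, $\Delta_4$ do not vanish on these ranges --- $\Delta_2=C_n^{d+2}C_{r-1}^{d+1}-C_n^{d+1}C_{r-1}^{d+2}\neq 0$ by strict monotonicity of $q\mapsto C_q^{d+2}/C_q^{d+1}$, while $\Delta_3=(r-d-m-1)(r-d-m)$ and $\Delta_4=(n-d-m)(n-d-m+1)$ are products of nonzero integers --- so that every matrix inverted above indeed exists.
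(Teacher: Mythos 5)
Your proposal is correct and follows essentially the same route as the paper: the same choice of $\mathbf{v}$, the same case analysis a)--d) via the shape of the polynomial $b(u)$ leading to $\mathbf{i}=(1,r-d,n-d+1)$, $(r-d,m,m+1)$ and $(m,m+1,n-d+1)$ with the corresponding solutions $\bm\alpha,\bm\delta,\bm\beta,\bm\theta,\bm\gamma,\bm\varphi$, the same optimization of $m$ through the non-negativity of $\mathbf{z}^*_{\mathbf{i}}(j)=\mathbf{F}_{\mathbf{i}}^{-1}\mathbf{s}(j)$, and the same reduction of the conditional case to the argument in Theorem \ref{th2}. The only difference is presentational: you compress the elimination of alternative index configurations, which the paper carries out explicitly, but you correctly identify it as the remaining routine work.
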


\begin{proof}
We will use properties of polynomial $b(u)$ from (\ref{bu}) to find $\mathbf{i}$.
As in the proof of Theorem \ref{th3}, we deal with two variants of vector $\mathbf{v}$
from (\ref{vv})
to get bounds for $P_r$ and $p_r$, correspondingly. 
Vector $\mathbf{v}_\mathbf{i}$ can again be as in (\ref{ad})
for $P_r$ and $p_r$, correspondingly. We deal 
with every case from a)--d) of (\ref{ad})
for $P_r$ and $p_r$ simultaneously.

If we derive lower bounds for $P_r$, then we need  $b_u\leqslant 0$ for $u \leqslant r-d$ and $b_u\leqslant 1$ for $u \geqslant r-d+1$. While we obtain such bounds for $p_r$, we need 
$b_u \leqslant 0$ for  $u \neq r-d+1$ and $b_{r-d+1} \leqslant 1$.

a) In this case $a_1=a_2=a_3=0$ and, therefore, $b_u=0$ for all $u$.
So, we have a trivial lower bounds by zero for $P_r$ and $p_r$ both.

b) In case of $P_r$, we have 
$b_{i_1}=b_{i_2}=0$ and $b_{i_3}=1$ for some $i_1<i_2 \leqslant r-d < i_3 \leqslant n-d+1$. 
Then there are two extremes of $b(u)$ on intervals $(0,i_1)$ and $(i_1,i_2)$. Hence, $b(u)$ is
strictly increasing on $(i_2, i_3)$. This yields that $i_3=n-d+1$. Moreover,
$b(u)$ has a local minimum on $(i_1,i_2)$ and this minimum is negative. It follows
that $i_2=r-d$. Further, $b(u)$ has a local maximum on $(0,i_1)$ and
this maximum is positive. Hence, $i_1=1$. This choice of $i_1, i_2, i_3$ only  implies
that $b_u \leqslant 0$ for  $u \leqslant r-d$ and
$b_u \leqslant 1$ for $u \geqslant r-d+1$. 

For $p_r$, we have
$b_{i_1}=b_{i_2}=0$ and $b_{r-d+1}=1$ for some $i_1<i_2 \leqslant r-d $.
Then there are two extremes of $b(u)$ on intervals $(0,i_1)$ and $(i_1,i_2)$. Hence, $b(u)$ is
strictly increasing on $(i_2, r-d+1)$. This yields that 
$r=n$ and $i_2=r-d$. Moreover, $b(u)$ has a local minimum on $(i_1,i_2)$ and this minimum is negative. 
Further, $b(u)$ has a local maximum on $(0,i_1)$ and
this maximum is positive. Hence, $i_1=1$. This choice of $i_1, i_2$  only yields
that $b_u \leqslant 0$ for  $u \neq r-d+1$ and
$b_{r-d+1} \leqslant 1$.

It follows that $\mathbf{i}$ and $\mathbf{v}_{\mathbf{i}}$ in (\ref{20}) are the same, but
we have $r=n$ for $p_r$.

Put $\mathbf{i}=(1,r-d,n-d+1)$. Then
$$ \mathbf{F}_\mathbf{i} = \left\lgroup \begin{matrix}
1 & C_{r-1}^d & C_n^d\cr  0 & C_{r-1}^{d+1} & C_n^{d+1} \cr  0 & C_{r-1}^{d+2} & C_n^{d+2}
\end{matrix} 
\right\rgroup \quad\mbox{and}\quad
\mathbf{v}_\mathbf{i} = \left\lgroup \begin{matrix}
0 \cr 0 \cr 1
\end{matrix} \right\rgroup.
$$

The solution of system (\ref{20}) is $ \mathbf{a}=\bm\alpha $ and we get (\ref{Pr3lb1}).

c) For $P_r$, we have $b_{i_1}=0$ and
$b_{i_2}=b_{i_3}=1$ for some $1\leqslant i_1 \leqslant r-d < i_2 < i_3 \leqslant n-d+1$.
Then $b(u)$ has local extremes on $(0,i_1)$ and $(i_2, i_3)$.
Hence, $b(u)$ is strictly increasing on $(i_1, i_2)$. Moreover, $b(u)$ has a local minimum on 
$(0,i_1)$ and this minimum is negative. Then $i_1=r-d$. Further,
$b(u)$ has a local maximum at $u_0\in (i_2,i_3)$ and this maximum is greater than 1.
It yields that $i_3=i_2+1$. This choice of $i_1, i_2, i_3$ only gives
$b_u \leqslant 0$ for  $u \leqslant r-d$ and
$b_u \leqslant 1$ for $u \geqslant r-d+1$. 

For $p_r$, we have $b_{i_1}=0$,
$b_{r-d+1}=1$ and $b_{i_3}=0$ for some $1\leqslant i_1 \leqslant r-d$ and
$r-d+1  < i_3 \leqslant n-d+1$.
Then $b(u)$ has local extremes on $(0,i_1)$ and $(i_1, i_3)$.
Moreover, $b(u)$ has a local minimum on 
$(0,i_1)$ and this minimum is negative. Therefore $i_1=r-d$. Further,
$b(u)$ has a local maximum on $(i_1,i_3)$ and this maximum is greater or equal to 1.
It yields that $i_3=r-d+2$. This choice of $i_1$ and $ i_3$   implies
that $b_u \leqslant 0$ for  $u \neq r-d+1$ and
$b_{r-d+1} \leqslant 1$. 
The matrix $\mathbf{F}_{\mathbf{i}}$ is
the same as that for $P_r$ when $i_2=r-d+1$. Vector $\mathbf{v}$ is different of course.

For $P_r$, put $\mathbf{i}=(r-d,m,m+1)$, where $r-d+1 \leqslant m \leqslant n-d$.
Then
$$ \mathbf{F}_\mathbf{i} = \left\lgroup \begin{matrix}
 C_{r-1}^{d} & C_{m+d-1}^{d} & C_{m+d}^{d}  \cr
 C_{r-1}^{d+1} & C_{m+d-1}^{d+1} & C_{m+d}^{d+1} \cr
 C_{r-1}^{d+2} & C_{m+d-1}^{d+2} & C_{m+d}^{d+2}  
\end{matrix} 
\right\rgroup \quad\mbox{and}\quad
\mathbf{v}_\mathbf{i} = \left\lgroup \begin{matrix}
0 \cr 1 \cr 1
\end{matrix} \right\rgroup.
$$
For $p_r$, we put $m=r-d+1$ and $\mathbf{v} = (0,1,0)^T$.

Note that
$$ \mathbf{F}^{-1}_\mathbf{i} = \left\lgroup \begin{matrix}
\frac{m(m-1)}{C_{r-1}^d \Delta_3} & 
-\frac{2(m-1)(d+1)}{C_{r-1}^d \Delta_3 } &
\frac{(d+1)(d+2)}{C_{r-1}^d \Delta_3 }  \cr
 \frac{m(r-d-1)(r-d-m-1)}{C_{m+d-1}^d \Delta_3 }& 
-\frac{(r-d-m-1)(r-d+m-2)(d+1)}{C_{m+d-1}^d \Delta_3 }& 
\frac{(r-d-m-1)(d+1)(d+2)}{C_{m+d-1}^d \Delta_3 } \cr
 - \frac{(m-1)(r-d-1)(r-d-m)}{C_{m+d}^d \Delta_3 }& 
 \frac{(r-d-m)(r-d+m-3)(d+1)}{C_{m+d}^d \Delta_3 }& 
- \frac{(r-d-m)(d+1)(d+2)}{C_{m+d}^d \Delta_3 } 
\end{matrix} 
\right\rgroup, 
$$

The solutions of system (\ref{20}) are $\mathbf{a}=\bm\beta$ for $P_r$ and 
 $\mathbf{a}=\bm\theta$ for $p_r$ and we obtain (\ref{Pr3lb2}).

One can check that $\beta_1<0$, $\beta_2>0$ and $\beta_3<0$.
 
Turn to an optimization over $m$.  By (\ref{30}), we have
$\mathbf{z}^*_{\mathbf{i}}(j) = \mathbf{F}^{-1}_\mathbf{i} \mathbf{s}(j)$.
Hence, we obtain
we have
\begin{eqnarray*}
&& \hspace*{-\parindent}
z^*_{r-d}(j) = \frac{1}{C_{r-1}^d \Delta_3} \left(
m(m-1) s_1(j) -2(m-1)(d+1) s_2(j) + (d+1)(d+2) s_3(j) \right),  
\\ &&\hspace*{-\parindent}
z^*_{m}(j) = \frac{r-d-m-1}{C_{m+d-1}^d \Delta_3 } \left(
m(r-d-1) s_1(j) -(r-d+m-2)(d+1) s_2(j) + 
\right. \\ && \hspace*{18\parindent} \left. +
(d+1)(d+2) s_3(j) \right),
\\ &&\hspace*{-\parindent}
z^*_{m+1}(j) = \frac{r-d-m}{C_{m+d}^d \Delta_3 } \left(
 - (m-1)(r-d-1) s_1(j)+  
\right. \\ && \hspace*{9\parindent} \left. +
 (r-d+m-3)(d+1) s_2(j) - (d+1)(d+2) s_3(j) \right).
\end{eqnarray*}
The inequalities $ z^*_{m}(j)\geqslant 0 $ and $ z^*_{m+1}(j)\geqslant 0 $ implies (\ref{mop1})
provided 
$ (d+1) s_2(j) >(r-d-1) s_1(j) $.


d) For $P_r$, we have  $b_{i_1}=b_{i_2}=b_{i_3}=1$ for some 
$r-d+1 \leqslant i_1< i_2 < i_3 \leqslant n-d+1$. 
Then $b(u)$ has local extremes on $(i_1,i_2)$ and $(i_2, i_3)$.
Hence, $b(u)$ is strictly increasing on $(0, i_1)$. This yields that $r-d=0$.
Moreover, $b(u)$ has a local maximum on  $(i_1,i_2)$ and 
a local minimum on $(i_2,i_3)$. This local minimum is less than 1.
Hence, $i_3=n-d+1$ and $i_2=i_1+1$.
This choice of $i_1, i_2, i_3$ only  implies
that $b_u \leqslant 0$ for  $u \leqslant r-d$ and
$b_u \leqslant 1$ for $u \geqslant r-d+1$. 

For $p_r$, we have 
$b_{r-d+1}=1$ and $b_{i_2}=b_{i_3}=0$ for some 
$r-d+1 < i_2 < i_3 \leqslant n-d+1$.
Then $b(u)$ has local extremes on $(0,i_2)$ and $(i_2, i_3)$.
Moreover, $b(u)$ has a local minimum on 
$(i_2,i_3)$ and this minimum is negative. Further,
$b(u)$ has a local maximum on $(0,i_2)$ and this maximum is greater or equal to 1.
It follows that $b(u)$ is positive in $(0, i_2)$.  Hence, $r-d=0$ and $i_2=r-d+2=2$. 
Furthermore, $b(u)$ is increasing for $u \geqslant i_3$. It yields that $i_3=n-d+1$.
This choice of $i_1$ and $ i_3$   implies
that $b_u \leqslant 0$ for  $u \neq r-d+1$ and
$b_{r-d+1} \leqslant 1$. 
The matrix $\mathbf{F}_\mathbf{i}$ is the same as that for $P_r$ with
$i_1=1$ and $r=d$. 

Take $r=d$.
For $P_r$, put $\mathbf{i}=(m,m+1, n-d+1)$, where $r-d+1 \leqslant m \leqslant n-d-1$.
Then
$$ \mathbf{F}_\mathbf{i} = \left\lgroup \begin{matrix}
C_{m+d-1}^{d} & C_{m+d}^{d} &  C_n^{d}  \cr
 C_{m+d-1}^{d+1} & C_{m+d}^{d+1} & C_n^{d+1} \cr
C_{m+d-1}^{d+2} & C_{m+d}^{d+2}  & C_n^{d+2} 
\end{matrix} 
\right\rgroup \quad\mbox{and}\quad
\mathbf{v}_\mathbf{i} = \left\lgroup \begin{matrix}
1 \cr 1 \cr 1
\end{matrix} \right\rgroup.
$$
For $p_r$, we take $m=1$ and $\mathbf{v}=(1,0,0)^T$.

Note that
$$ \mathbf{F}^{-1}_\mathbf{i} = \left\lgroup \begin{matrix}
 \frac{m(n-d)(n-d-m)}{C_{m+d-1}^d \Delta_4 }& 
-\frac{(n-d-m)(n-d+m-1)(d+1)}{C_{m+d-1}^d \Delta_4 }& 
\frac{(n-d-m)(d+1)(d+2)}{C_{m+d-1}^d \Delta_4 } \cr
 - \frac{(m-1)(n-d)(r-d-m+1)}{C_{m+d}^d \Delta_4 }& 
 \frac{(n-d-m+1)(n-d+m-2)(d+1)}{C_{m+d}^d \Delta_4 }& 
- \frac{(n-d-m+1)(d+1)(d+2)}{C_{m+d}^d \Delta_4 } \cr
\frac{m(m-1)}{C_{n}^d \Delta_4} & 
-\frac{2(m-1)(d+1)}{C_{n}^d \Delta_4 } &
\frac{(d+1)(d+2)}{C_{n}^d \Delta_4 }  
\end{matrix} 
\right\rgroup. 
$$

The solutions of system (\ref{20}) are $ \mathbf{a}=\bm\gamma $ for $P_r$
$ \mathbf{a}=\bm\varphi $ for $p_r$
and we arrive at (\ref{Pr3lb3}).

It is not difficult to check that $\gamma_1=1$ and $\gamma_2=a_3=0$ for $d=0$. 
For $d>0$, one can check that $\gamma_1>0$, $\gamma_2<0$ and $\gamma_3>0$. 

Optimize over $m$.  By (\ref{30}), we have
$\mathbf{z}^*_{\mathbf{i}}(j) = \mathbf{F}^{-1}_\mathbf{i} \mathbf{s}(j)$.
Hence, we get
\begin{eqnarray*}
&& \hspace*{-\parindent}
z^*_{n-d+1}(j) = \frac{1}{C_{n}^d \Delta_4} \left(
m(m-1) s_1(j) -2(m-1)(d+1) s_2(j) + (d+1)(d+2) s_3(j) \right),  
\\ && \hspace*{-\parindent}
z^*_{m}(j) = \frac{n-d-m}{C_{m+d-1}^d \Delta_4 } \left(
m(n-d) s_1(j) -(n-d+m-1)(d+1) s_2(j) + (d+1)(d+2) s_3(j) \right),
\\ && \hspace*{-\parindent}
z^*_{m+1}(j) = \frac{n-d-m+1}{C_{m+d}^d \Delta_4 } \left(
 - (m-1)(n-d) s_1(j)+  
 \right. \\ && \hspace*{9\parindent} \left. +
 (n-d+m-2)(d+1) s_2(j) - (d+1)(d+2) s_3(j) \right).
\end{eqnarray*}
The inequalities $ z^*_{m}(j)\geqslant 0 $ and $ z^*_{m+1}(j)\geqslant 0 $ yield (\ref{mop2})
provided 
$(n-d) s_1(j)-(d+1) s_2(j)>0 $.


Inequality (\ref{Pr3lb}) follows by the proof above.

For conditional probabilities, the argument is the same as that in Theorem \ref{th2}.
Hence, we omit details.
\end{proof}

For $d=0$, one can find inequalities of Theorems \ref{th2}--\ref{th4} in [16].
For $d>0$, the results are new. 

Similar inequalities can be obtained for measurable space
$(\Omega, \mathcal{F}, \mu)$ with $\mu(\Omega)<\infty$ by introducing of probability 
$\mathbf{P}(\cdot)=  \mu(\cdot)/\mu(\Omega)$. For $\sigma$-finite $\mu$,
we have $\Omega = \bigcup_{k=1}^\infty \Omega_k$ with $\Omega_k\cap \Omega_i = \emptyset$
for $k\neq i$ and $\mu(\Omega_k)<\infty$ for all $k$. For $B\in \mathcal{F}$, we can get
bounds for $\mu(B\cap \Omega_k)$ for every $k$ and derive a bound for $B$ then.
In particular, it works for the Lebesgue measure.
If $\mu$ is a measure with sign, we have
$\mu=\mu^+-\mu^-$, where $\mu^+$ and $\mu^-$ are two finite measures concentrated on
$\Omega^+$ and $ \Omega^-$ with $\Omega^+\cap \Omega^- = \emptyset$ and
$\Omega = \Omega^+\cup\Omega^-$. Hence, $\mu(B)=\mu^+(B)-\mu^-(B)$ and
$\mu^+(B)$ and $\mu^-(B)$ can be underestimated separately as mentioned 
before. Moreover, our approach yields that bounds will be sharp, i.e.
they can turn to equalities for some sets.



\bigskip
\noindent
{\bf References}
{\parindent0mm
{\begin{itemize}
\footnotesize
\item[{[1]}]
Frolov A.N., 2017. On inequalities for probabilities wherein at least $r$ from $n$ events occur.
Vestnik Sankt-Peterburgskogo Universiteta: Matematika, Mekhanika, Astronomiya, 2017, Vol. 62, No. 3, pp. 
477--478. (In Russian). 
English translation: Vestn. St. Petersburg Univ.: Math. 50, 287--296 (2017),
Allerton Press, Inc.
\item[{[2]}]
Frolov A.N., 2018. On inequalities for probabilities of joint occurrence of several events. 
Vestnik Sankt-Peterburgskogo Universiteta: Matematika, Mekhanika, Astronomiya, 2018, Vol. 63, No. 3, pp. 
464--476. (In Russian). 
English translation: Vestn. St. Petersburg Univ.: Math. 51, 286--295 (2018).
Allerton Press, Inc.
\item[{[3]}]
Frolov A.N., 2012.
Bounds for probabilities of unions of events and the Borel--Cantelli lemma.
Statist. Probab. Lett. 82, 2189--2197.
\item[{[4]}]
Frolov A.N., 2015. On lower and upper bounds for probabilities of unions and the Borel---Cantelli lemma.
Studia Sci. Math. Hungarica. 52 (1), 102--128.
\item[{[5]}]
Frolov A.N., 2019. On bounds for probabilities of combinations of events, Jordan's formula  
and Bonferroni inequalities. 
Vestnik Sankt-Peterburgskogo Universiteta: Matematika, Mekhanika, Astronomiya, 2019, Vol. 64, No. 2, pp. 253--265 (In Russian). 
English translation: 
Vestnik St. Petersburg University, Mathematics, 2019, Vol. 52, No. 2, pp. 178--186. 
Allerton Press, Inc.
\item[{[6]}]
Chung K.L., Erd\H{o}s P., 1952. On the application of the Borel--Cantelli lemma.
Trans. Amer. Math. Soc. 72, 179--186.
\item[{[7]}]
Gallot S. 1966. A bound for the maximum of a number of random variables.
J.~Appl.~Probab. 3, 556--558.
\item[{[8]}]
Dawson D.A., Sankoff D., 1967. An inequality for probabilities.
Proc.~Amer.~Math.~Soc. 18, 504--507.
\item[{[9]}]
Kounias E.G., 1968. Bounds for the probability of a union, 
with applications.
Ann. Math. Statist. 39, 2154--2158.
\item[{[10]}]
Kwerel S.M., 1975a. Bounds on the probability of the union and 
intersection of $m$ events.
Adv. Appl. Probab. 7, 431--448.
\item[{[11]}]
Kwerel S.M., 1975b. Most stringent bounds on aggregated probabilities of partially specified dependent probability systems. J. of Amer. Statist. Assoc., 70, 472--479.
\item[{[12]}]
Kwerel S.M., 1975c. Most stringent bounds on the probability of the union and intersection of m events for systems partially specified by $S_{1}, S_{2}, \cdots S_{k}, 2 \leqq k < m$.
J. of Appl. Probab., 12, 612--619. 
\item[{[13]}]
M\'{o}ri T.F., Sz\'{e}kely  G.J., 1985.
A note on the background of several Bonferroni--Galambos-type inequalities. J. of Appl. Probab., 22, 836--843.
\item[{[14]}]
Boros E., Pr\'{e}kopa A., 1989.  Closed form two-sided bounds for probabilities
that at least $r$ and exactly $r$ out of $n$ events occurs.
Math. Oper. Research. 14, 317--342.
\item[{[15]}]
Sibuya M., 1991.  Bonferroni-type inequalities; Chebyshev-type inequalities
for distributions on $[0,n]$. Ann. Inst. Statist. Math., 43,  2, 261--285. 
\item[{[16]}]
Kounias S., Sotirakoglou K., 1993. Upper and lower bounds for
the probability that $r$ events occur. J. Math. Programming.
Oper. Research., 27, 1-2, 63--78.
\item[{[17]}]
Galambos J., Simonelli I., 1996. Bonferroni-type inequalities with applications.
Springer-Verlag N.Y.
\item[{[18]}]
de Caen D., 1997. A lower bound on the probability of a union.
Discrete Math. 169, 217--220.
\item[{[19]}]
Kuai H., Alajaji F., Takahara G., 2000. A lower bound on the probability of a finite union of events.
Discrete Math. 215, 147--158.
\item[{[20]}]
Pr\'{e}kopa A.,Gao L. 2005. Bounding the probability of the union of events
by aggregation and disaggregation in linear programs.
Discrete Appl. Math. 145, 444--454.
\item[{[21]}]
Frolov A.N., 2012.
Bounds for probabilities of unions of events and the Borel--Cantelli lemma.
Statist. Probab. Lett. 82, 2189--2197.
\item[{[22]}]
Frolov A.N., 2014. On inequalities for probabilities of unions 
of events and the Borel--Cantelli lemma.
Vestnik Sankt-Peterburgskogo Universiteta, Seriya 1. Matematika, 
Mekhanika, Astronomiya, N 2, 201--210. (In Russian) 
English translation: Vestnik St.Petersburg University, 
Mathematics, 2014, 47, N 2, 68--75.
Allerton Press, Inc.
\item[{[23]}]
Frolov A.N., 2015. On lower and upper bounds for probabilities of unions and the Borel---Cantelli lemma.
Studia Sci. Math. Hungarica. 52 (1), 102--128.
\item[{[24]}]
Frolov A.N., 2015b. On estimation of probabilities of
unions of events with applications to the Borel--Cantelli 
lemma. Vestnik Sankt-Peterburgskogo Universiteta, Seriya 1. 
Matematika, Mekhanika, Astronomiya, N 3, 399--404.
English translation: Vestnik St.Petersburg University, Mathematics, 2015, 48, N 3, 175--180.
Allerton Press, Inc.
\item[{[25]}]
A. N. Frolov, On inequalities for conditional probabilities of unions of events and the conditional Borel--Cantelli lemma. 
Vestnik Sankt-Peterburgskogo Universiteta: Matematika, Mekhanika, Astronomiya, 2016, Vol. 61, No. 4, pp. 
651--662. (In Russian).
English translation: 
Vestn. St. Petersburg Univ.: Math. 49, 379--388 (2016).
Allerton Press, Inc.
\item[{[26]}]
Frolov A.N., 2017. On inequalities for values of first jumps of distribution functions and H\"{o}lder's 
inequality. Statist. Probab. Lett. 126, 150-156.

\end{itemize}
}
}

\end{document}